\numberwithin{equation}{section}
\newtheorem{theorem}{Theorem}[section]
\newtheorem{lemma}{Lemma}[section]
\newtheorem{proposition}{Proposition}[section]
\newtheorem{corollary}{Corollary}[section]
\newtheorem{definition}{Definition}[section]
\numberwithin{equation}{section} 
\title{
Liouville Theorems for critical points of the $p$-Ginzburg-Landau type functional\author{Tian Chong, Bofeng Cheng, Yuxin Dong and Wei Zhang
}
%
\footnotetext[0]{2010 Mathematics Subject Classification. Primary: 53C20, 53C21,  53C55. }
\footnotetext[0]{ This research was supported by NSFC grant No. 11271071 and LMNS, Fudan; the key project}
\footnotetext[0]{grant No. XXKPY1604 , Shanghai Second Polytechnic University}
}
\author{}
\date{}
\begin{document}


\maketitle

\begin{abstract}
\noindent \textbf{Abstract.}
In this paper, we consider the smooth map from a Riemannian manifold to the standard Euclidean space and the $p$-Ginzburg-Landau energy($p\geq1$). Under suitable curvature conditions on the domain manifold, some Liouville type theorems are established by assuming
either growth conditions of the $p$-Ginzburg-Landau energy or an asymptotic condition
at the infinity for the maps. In the end of paper, we obtain the unique constant solution of the constant Dirichlet boundary value problems on starlike domains.
\end{abstract}

\section{Introduction}

\ \ \ \ One of the important problems for harmonic maps or generalized harmonic maps is to study their Liouville type results. (cf. \cite{Ch,GRSB,Hi,SY,Jin}). It is well known that the stress-energy tensor is a useful tool to investigate the energy behavior and some vanishing results of related energy functional. Most Liouville results have been established by assuming either the finiteness of the energy of the map or the smallness of the whole image of the domain manifold under the map. In \cite{Jin}, Z.R. Jin has shown several interesting Liouville theorems for harmonic maps from complete manifolds with assumptions on the asymptotic behavior of the maps at infinity.\\
\indent Let $\Omega\subseteq R^2$ be a smooth bounded simply connected domain. Consider the following functional defined for maps
$u\in H^1(\Omega,\mathbb{C})$:

\begin{align}
E_{\epsilon}(u)=\frac{1}{2}\int_{\Omega}|\nabla u|^2+\frac{1}{4\epsilon^2}\int_{\Omega}(|u|^2-1)^2.\nonumber
\end{align}
Ginzburg-Landau introduced this Functional in the
study of phase transition problems and it plays an important role ever since, especially in superconductivity, superfluidity and XY-magnetism (see details for \cite{KT,Nelson,Saint}). A lot of papers devote to the asymptotic behavior of minimizers $u_{\epsilon}$ of $E_{\epsilon}(u,\Omega)$ in $H$ as $\epsilon\rightarrow 0$. It was shown in those cases that $u_{\epsilon}$ converges strongly to a harmonic map $u_0$ on any compact subset away from the zeros. Readers can refer to \cite{BBH,BBH2,BMR,Struwe} for the progress in this field. In the past decades, $p$-Ginzburg-Landau functionals have been introduced. In \cite{Hong,Lei}, the authors investigated the convergence of a p-Ginzburg-Landau type functional when the parameter goes to zero.\\
\indent In this paper, we consider a smooth map $u:(M^m,g)\rightarrow (R^n,h)$ from a Riemannian manifold to the standard Euclidean space and the following $p$-Ginzburg-Landau energy
\begin{align}
E_{GL}^p(u)=\int_M\frac{|du|^p}{p}+\frac{1}{4\epsilon^n}(1-|u|^2)^2dv_g, \nonumber
\end{align}
where $p\geq 1$ and $\epsilon$ is any small positive number. To generalize the Liouville type results for harmonic maps to the critical points of $p$-Ginzburg-Landau energy functional, we introduce the stress energy tensor $S_{u,p}^{GL}$ associated with the $p$-Ginzburg-Landau functional $E_{GL}^p(u)$. It is easy to show that any critical point of the $p$-Ginzburg-Landau functional satisfies the conservation law, that is, $div S_{u,p}^{GL}=0$. Using a basic integral formula
linked naturally to the conservation law enables us to establish some monotonicity formulae for these critical points of the $p$-Ginzburg-Landau energy functional. Consequently, several Liouville type results can be deduced from these monotonicity formulae under suitable growth conditions on the energy. We also build a Liouville type result under the condition of slowly divergent energy.

Next we want to generalize Jin's results in \cite{Jin} to the critical point of the $p$-Ginzburg-Landau energy functional. The methods we use for proving the result is very similar to Jin's. Firstly, we may use the stress-energy tensor to establish the
monotonicity formula which gives a lower bound for the growth rates of the energy. Secondly, we use the asymptotic assumption of the map at infinity
to obtain the upper energy growth rates. Under suitable
conditions on $u$ and the Hessian of the distance functions of the domain manifolds, one
may show that these two growth rates are contradictory unless the critical point
is constant. In this way, we establish some Liouville theorems for the critical points of the $p$-Ginzburg-Landau energy functional with the asymptotic property at infinity from some complete manifolds.\\
\indent In addition to establishing Liouville type results, the
monotonicity formulae may be used to investigate the constant Dirichlet boundary value problem as well. We obtain the unique constant solution of the constant Dirichlet boundary value problem on starlike domains for the critical point of $p$-Ginzburg-Landau energy functional.\\
\section{$p$-Ginzburg-Landau energy functional and stress-energy tensor}
Let $u:(M^m,g)\rightarrow (R^n,h)$ be a smooth map from a Riemannian manifold to the standard Euclidean space. We consider the following $p$-Ginzburg-Landau energy
\begin{align}
E_{GL}^p(u)=\int_M\frac{|du|^p}{p}+\frac{1}{4\epsilon^n}(1-|u|^2)^2dv_g, \nonumber
\end{align}
where $p\geq 1$ and $\epsilon$ is any small positive number.
Let $\{u_{t}\}(|t|<\kappa)$ with $u_{0}=u$ and $v= \frac{\partial u_t}{\partial t}|_{t=0}$ be a one parameter variation, we have the following lemma.
\begin{lemma}
The first variation formula for $p$-Ginzburg-Landau energy functional
\begin{align}
\frac{d}{dt}|_{t=0}E_{GL}^p(u_t)=-\int_M\langle div(|du|^{p-2}du)+\frac{1}{\epsilon^n}(1-|u|^2)u,v \rangle dv_g.\nonumber
\end{align}
\end{lemma}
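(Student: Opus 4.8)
The plan is to compute the first variation directly by differentiating under the integral sign and then integrating by parts. Write $E_{GL}^p(u_t) = \int_M \frac{|du_t|^p}{p}\, dv_g + \int_M \frac{1}{4\epsilon^n}(1-|u_t|^2)^2\, dv_g$ and handle the two terms separately. For the first term, use $|du_t|^2 = \langle du_t, du_t\rangle$, so $\frac{d}{dt}\big|_{t=0}\frac{|du_t|^p}{p} = |du|^{p-2}\langle du, \nabla v\rangle$, where I use that $\frac{\partial}{\partial t}(du_t) = \nabla v$ at $t=0$ (the variation field pushes forward to $\nabla v$; here $\nabla$ denotes the induced connection on $T^*M \otimes u^{-1}TR^n$). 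For the second term, $\frac{d}{dt}\big|_{t=0}\frac{1}{4\epsilon^n}(1-|u_t|^2)^2 = \frac{1}{4\epsilon^n}\cdot 2(1-|u|^2)\cdot(-2\langle u, v\rangle) = -\frac{1}{\epsilon^n}(1-|u|^2)\langle u, v\rangle$.

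Next I would integrate the first term by parts. Since $\int_M \langle |du|^{p-2} du, \nabla v\rangle\, dv_g = \int_M \langle \mathrm{div}(|du|^{p-2}du), v\rangle\, dv_g \cdot(-1)$ up to sign conventions — more carefully, using the divergence theorem on the one-form $X \mapsto \langle |du|^{p-2}du(X), v\rangle$ and noting there is no boundary (or that $v$ has compact support, or decays suitably) — one gets $\int_M |du|^{p-2}\langle du, \nabla v\rangle\, dv_g = -\int_M \langle \mathrm{div}(|du|^{p-2}du), v\rangle\, dv_g$. Combining the two contributions yields
\begin{align}
\frac{d}{dt}\Big|_{t=0} E_{GL}^p(u_t) = -\int_M \Big\langle \mathrm{div}(|du|^{p-2}du) + \frac{1}{\epsilon^n}(1-|u|^2)u,\ v\Big\rangle\, dv_g,\nonumber
\end{align}
which is the claimed formula.

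The one genuine subtlety — and the step I would flag as the main obstacle — is the regularity of $|du|^{p-2}$ when $p < 2$: the integrand $\frac{|du_t|^p}{p}$ is not smooth in $t$ at points where $du_t = 0$, so differentiation under the integral sign needs justification there. The standard fix is to note that $t \mapsto |du_t|^p$ is still differentiable a.e. with the derivative bounded by an integrable function on compact sets (since $p \geq 1$ the function $s \mapsto s^{p/2}$ is at worst Hölder, and $\big||du_t|^{p-2}\langle du_t, \nabla v\rangle\big| \leq |du_t|^{p-1}|\nabla v|$ which is locally integrable), so dominated convergence applies; alternatively one works on the open set where $du \neq 0$ and checks the zero set contributes nothing. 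For $p \geq 2$ everything is smooth and this issue disappears. A second minor point is the boundary term: the formula as stated implicitly assumes either $M$ is closed, or $v$ is compactly supported, or appropriate decay — I would state this hypothesis explicitly (or absorb it into the meaning of "one parameter variation").
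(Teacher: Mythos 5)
Your proposal is correct and follows essentially the same route as the paper: differentiate under the integral, identify $\partial_t du_t|_{t=0}$ with $\nabla v$ (the paper does this via $\nabla_{\partial/\partial t}du_t(e_i)=\nabla_{e_i}du_t(\partial/\partial t)$ in an orthonormal frame), apply the chain rule to the potential term, and integrate the gradient term by parts against $\mathrm{div}(|du|^{p-2}du)$. Your added remarks on the $p<2$ regularity at zeros of $du$ and on the compact-support/boundary hypothesis address points the paper leaves implicit, but they do not change the argument.
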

\begin{proof}
Let $\{e_i\}_{i=1}^{m}$ be a local orthonormal frame of $TM$. Since the target manifold is Standard Euclidean space, we can perform the following calculations
\begin{align}
\frac{d}{dt}|_{t=0}E_{GL}^p(u_t)&=\int_M\frac{\partial}{\partial t}|_{t=0}(\frac{|du_t|^p}{p})dv_g
+\int_M\frac{\partial}{\partial t}|_{t=0}[\frac{1}{4\epsilon^n}(1-|u_t|^2)^2]dv_g \nonumber \\
&=\int_M|du|^{p-2}\sum_{i=1}^m\langle \nabla_{\frac{\partial}{\partial t}}du_t(e_i),du_t(e_i) \rangle|_{t=0}dv_g-\frac{1}{2\epsilon^n}\int_M(1-|u_t|^2)\frac{\partial}{\partial t}|_{t=0}|u_t|^2dv_g \nonumber \\
&=\int_M|du|^{p-2}\sum_{i=1}^m\langle \nabla_{e_i}du_t(\frac{\partial}{\partial t}),du_t(e_i) \rangle|_{t=0}dv_g-\frac{1}{\epsilon^n}\int_M(1-|u|^2)v\cdot udv_g \nonumber \\
&=\int_M|du|^{p-2}\sum_{i=1}^m\langle \nabla_{e_i}v,du(e_i) \rangle dv_g-\frac{1}{\epsilon^n}\int_M(1-|u|^2)v\cdot udv_g \nonumber \\
&=-\int_M\langle div(|du|^{p-2}du),v \rangle dv_g-\frac{1}{\epsilon^n}\int_M(1-|u|^2)v\cdot udv_g \nonumber \\
&=-\int_M\langle div(|du|^{p-2}du)+\frac{1}{\epsilon^n}(1-|u|^2)u,v \rangle dv_g. \nonumber
\end{align}
\end{proof}
\begin{definition}
$u$ is called a critical point of $p$-Ginzburg-Landau energy functional if
\begin{align}
div(|du|^{p-2}du)+\frac{1}{\epsilon^n}(1-|u|^2)u=0.   \nonumber
\end{align}
When $p=2$, above equation is reduced to $\Delta u+\frac{1}{\epsilon^n}(1-|u|^2)u=0.$
\end{definition}
In [BE], Baird-Eells introduced the stress-energy tensor associated with the usual energy and proved that harmonic maps satisfy the conservation law.
We can also define the stress-energy tensor $S_{u,p}^{GL}$ associated with the $p$-Ginzburg-Landau energy functional $E_{GL}^p(u)$ and prove that the critical points satisfy the conservation law, i.e. $divS_{u,p}^{GL}=0$.
\begin{definition}
Let $u:(M^m,g)\rightarrow (R^n,h)$ be a smooth map from a Riemannian manifold to the standard Euclidean space. The stress-energy tensor of $u$ is the symmetric $2$-tensor on $M$ given by
\begin{align}
S_{u,p}^{GL}=[\frac{|du|^p}{p}+\frac{1}{4\epsilon^n}(1-|u|^2)^2]g-|du|^{p-2}u^*h. \nonumber
\end{align}
\end{definition}

\begin{theorem}
$div S_{u,p}^{GL}(X)=-\langle div(|du|^{p-2}du)+\frac{1}{\epsilon^n}(1-|u|^2)u,du(X)\rangle$, for any $X\in \Gamma(TM)$.
\end{theorem}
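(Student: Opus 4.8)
The plan is to expand $S_{u,p}^{GL} = f\,g - |du|^{p-2}u^*h$, where $f := \frac{|du|^p}{p} + \frac{1}{4\epsilon^n}(1-|u|^2)^2$ is the $p$-Ginzburg-Landau energy density, and to compute the divergence of the two summands separately, working in a local orthonormal frame $\{e_i\}$ that is normal at the point in question. Since $g$ is parallel, $div(f g)(X) = X(f) = df(X)$ for every $X\in\Gamma(TM)$, and a direct differentiation (using $X(|du|^2) = 2\sum_i\langle(\nabla_X du)(e_i),du(e_i)\rangle$ and $X(|u|^2)=2\langle du(X),u\rangle$) gives
\[
df(X) = |du|^{p-2}\langle\nabla_X du, du\rangle - \frac{1}{\epsilon^n}(1-|u|^2)\langle du(X),u\rangle,
\]
where $\langle\nabla_X du, du\rangle := \sum_i\langle(\nabla_X du)(e_i),du(e_i)\rangle$.

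For the second summand set $\omega := |du|^{p-2}u^*h$, so $\omega(Y,Z) = |du|^{p-2}\langle du(Y),du(Z)\rangle$. Expanding $(\nabla_{e_i}\omega)(e_i,X) = e_i(\omega(e_i,X)) - \omega(\nabla_{e_i}e_i,X) - \omega(e_i,\nabla_{e_i}X)$ and writing $\nabla_{e_i}(du(X)) = (\nabla_{e_i}du)(X) + du(\nabla_{e_i}X)$, the terms carrying $\nabla_{e_i}X$ cancel against the last term, leaving for $(div\,\omega)(X)$ the sum over $i$ of $e_i(|du|^{p-2})\langle du(e_i),du(X)\rangle + |du|^{p-2}\langle(\nabla_{e_i}du)(e_i),du(X)\rangle + |du|^{p-2}\langle du(e_i),(\nabla_{e_i}du)(X)\rangle$. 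The first two of these recombine into $\langle div(|du|^{p-2}du),du(X)\rangle$. For the third I would use the symmetry of $\nabla du$, i.e.\ $(\nabla_{e_i}du)(X) = (\nabla_X du)(e_i)$ --- valid because the target is the standard Euclidean space and the connection on $M$ is torsion-free, so $\nabla du$ is literally a Hessian --- to rewrite it as $|du|^{p-2}\langle\nabla_X du, du\rangle$. Thus $(div\,\omega)(X) = \langle div(|du|^{p-2}du),du(X)\rangle + |du|^{p-2}\langle\nabla_X du,du\rangle$.

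Subtracting the two computations, $div\,S_{u,p}^{GL}(X) = df(X) - (div\,\omega)(X)$; the two copies of $|du|^{p-2}\langle\nabla_X du,du\rangle$ cancel, and one is left with precisely $-\langle div(|du|^{p-2}du) + \frac{1}{\epsilon^n}(1-|u|^2)u,\,du(X)\rangle$. The only place requiring care is the expansion of $div\,\omega$ --- in particular verifying that the $\nabla_{e_i}X$ contributions really do cancel and applying the symmetry of $\nabla du$ correctly; the rest is routine Leibniz-rule bookkeeping, and because the codomain is flat no curvature terms ever enter.
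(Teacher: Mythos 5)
Your proof is correct and follows essentially the same route as the paper: both expand $div\,S_{u,p}^{GL}(X)$ in a local orthonormal frame via the Leibniz rule, use $X\bigl(\frac{|du|^p}{p}+\frac{1}{4\epsilon^n}(1-|u|^2)^2\bigr)$, cancel the $\nabla_{e_i}X$ contributions, and invoke the symmetry of $\nabla du$ (flat target) to identify the remaining term with $|du|^{p-2}\langle\nabla_X du,du\rangle$. Your write-up is in fact slightly more careful about this last symmetry step than the paper's own computation.
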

\begin{proof}
\indent For any 2-tensor field $W\in\Gamma(T^*M\otimes T^*M)$, the divergence of $W$ is defined by
\begin{align}
(divW)(X)=\sum_{i=1}^m(\nabla^M_{e_i}W)(e_i,X),\label{1033}
\end{align}
where $\{e_i\}_{i=1}^m$ is an local orthonormal basis of $M$. Then we have
\begin{align}
divS_{u,p}^{GL}(X)&=\sum_{i=1}^m[\nabla_{e_i}S_{F,u}^{GL}(e_i,X)]-S_{F,u}^{GL}(e_i,\nabla_{e_i}X) \nonumber \\
&=\sum_{i=1}^me_i\{[\frac{|du|^p}{p}+\frac{1}{4\epsilon^n}(1-|u|^2)^2]\langle e_i,X \rangle\}
-\sum_{i=1}^me_i\{|du|^{p-2}\langle du(e_i),du(X) \rangle\} \nonumber \\
&-[\frac{|du|^p}{p}+\frac{1}{4\epsilon^n}(1-|u|^2)^2]\sum_{i=1}^m\langle e_i,\nabla_{e_i}X \rangle+|du|^{p-2}\sum_{i=1}^m\langle du(e_i),du(\nabla_{e_i}X) \rangle \nonumber \\
&=X[\frac{|du|^p}{p}+\frac{1}{4\epsilon^n}(1-|u|^2)^2]-\sum_{i=1}^me_i(|du|^{p-2})\langle du(e_i),du(X) \rangle \nonumber \\
&-\sum_{i=1}^m|du|^{p-2}\langle \nabla_{e_i}du(e_i),du(X) \rangle-\sum_{i=1}^m|du|^{p-2}\langle du(e_i),\nabla_{e_i}du(X) \rangle \nonumber \\
&+|du|^{p-2}\sum_{i=1}^m\langle du(e_i),du(\nabla_{e_i}X) \rangle \nonumber \\
&=-\langle div(|du|^{p-2}du)+\frac{1}{\epsilon^n}(1-|u|^2)u,du(X) \rangle \nonumber
\end{align}
\end{proof}
\begin{definition}
We say that $u$ satisfies the conservation law if $divS_{u,p}^{GL}=0$.
\end{definition}
\begin{corollary}\label{1034}
If $u:(M^m,g)\rightarrow (R^n,h)$ is a critical point of $p$-Ginzburg-Landau energy functional, then $u$ satisfies the conservation law, i.e., $divS_{u,p}^{GL}=0$.
\end{corollary}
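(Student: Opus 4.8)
The plan is to derive this as an immediate consequence of the preceding Theorem. That theorem expresses the divergence of the stress-energy tensor pointwise as
\begin{align}
div S_{u,p}^{GL}(X)=-\Big\langle div(|du|^{p-2}du)+\tfrac{1}{\epsilon^n}(1-|u|^2)u,\,du(X)\Big\rangle \nonumber
\end{align}
for every $X\in\Gamma(TM)$, so the entire content of the corollary is to recognize that the first factor in this inner product is precisely the Euler--Lagrange operator appearing in the definition of a critical point.

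Concretely, I would argue as follows. Suppose $u$ is a critical point of $E_{GL}^p$. By Definition, this means $div(|du|^{p-2}du)+\frac{1}{\epsilon^n}(1-|u|^2)u=0$ identically on $M$. Substituting this vanishing expression into the right-hand side of the Theorem's identity yields $div S_{u,p}^{GL}(X)=0$ for all $X\in\Gamma(TM)$. Since $div S_{u,p}^{GL}$ is a $1$-form (the divergence of the symmetric $2$-tensor $S_{u,p}^{GL}$) and it annihilates every vector field, it is the zero $1$-form; equivalently $div S_{u,p}^{GL}=0$, which is the assertion that $u$ satisfies the conservation law.

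There is essentially no obstacle here: the corollary is a formal substitution of the critical point equation into the already-established divergence formula. The only point worth stating carefully is that the identity in the Theorem holds for arbitrary $X$, so that pointwise vanishing of the scalar $div S_{u,p}^{GL}(X)$ for all $X$ upgrades to vanishing of the tensor $div S_{u,p}^{GL}$ itself. I would keep the write-up to two or three lines, simply invoking the Theorem and the Definition of a critical point.
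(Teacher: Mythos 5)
Your proposal is correct and matches the paper's (implicit) argument exactly: the corollary is stated without proof precisely because it follows by substituting the critical point equation of Definition 2.2 into the divergence identity of the preceding Theorem, which is what you do. Nothing is missing.
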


\indent For any vector field $X\in\Gamma(TM)$, let $\theta_{X}$ denote the dual one form of X, that is,
\begin{align}
\theta_X(Y)=g(X,Y),     \ \ \ \ \ \ \ \ \ \forall Y\in\Gamma(TM).
\end{align}
The covariant derivative of $\theta_{X}$ is given by
\begin{align}
(\nabla^M\theta_X)(Y,Z)=(\nabla^M_Y\theta_X)(Z)=g(\nabla^M_Y X,Z), \nonumber
\end{align}
for any $X,Y,Z \in \Gamma(TM)$. If $X=\nabla^M \psi$ is the gradient of some smooth function $\psi$ on $M$, then $\theta_X=d\psi$ and $\nabla^M\theta_X=Hess_g(\psi)$. \\

\indent Let $W\in\Gamma(T^*M \otimes T^*M)$ be any symmetric $2$-tensor. By a direct computation, we have
\begin{align}
div(i_XW)&=(divW)(X)+\langle W,\nabla^M\theta_X\rangle \nonumber \\
&=(divW)(X)+\frac{1}{2}\langle W,L_Xg\rangle, \nonumber
\end{align}
where $i_XW\in A^1(M)$ denotes the interior product by any $X \in \Gamma(TM)$.\\
\indent In terms of the Stoke's formula we get
\begin{lemma}\label{201}
Let $D$ be any bounded domain of $M$ with $C^1$ boundary. Denote by $\nu$ the unit outward normal vector field along $\partial D$. For any symmetric $2$-tensor $W\in\Gamma(T^*M \otimes T^*M)$ and any vector field $X\in\Gamma(TM)$, we have
\begin{align}
\int_{\partial D}(i_XW)(\nu)ds_g=\int_D\langle W,\nabla^M\theta_X\rangle+(divW)(X)dv_g \nonumber
\end{align}
and
\begin{align}
\int_{\partial D}(i_XW)(\nu)ds_g=\int_D\frac{1}{2}\langle W,L_Xg\rangle+(divW)(X)dv_g. \nonumber
\end{align}
\end{lemma}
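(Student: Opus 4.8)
The plan is to derive both identities from the pointwise formula established just above the lemma, namely
\[
\operatorname{div}(i_X W) = (\operatorname{div} W)(X) + \langle W, \nabla^M\theta_X\rangle = (\operatorname{div} W)(X) + \tfrac{1}{2}\langle W, L_X g\rangle,
\]
combined with the classical divergence theorem on the bounded domain $D$ with $C^1$ boundary. First I would recall that $i_X W$ is a $1$-form on $M$, so that $\operatorname{div}(i_X W)$ is the ordinary divergence of the vector field metrically dual to $i_X W$; the Riemannian Stokes (divergence) theorem then gives
\[
\int_D \operatorname{div}(i_X W)\, dv_g = \int_{\partial D} \langle (i_X W)^\sharp, \nu\rangle\, ds_g = \int_{\partial D} (i_X W)(\nu)\, ds_g,
\]
where $\nu$ is the outward unit normal and $ds_g$ the induced boundary measure. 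Substituting the pointwise identity for $\operatorname{div}(i_X W)$ into the left-hand side yields the first displayed equation of the lemma, and using the alternative form $\tfrac12\langle W, L_X g\rangle$ in place of $\langle W, \nabla^M\theta_X\rangle$ yields the second. Since $W$ is symmetric, the two forms of the integrand agree pointwise, so the two integral identities are in fact equivalent.

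The only point requiring a little care is the justification of the pointwise formula $\langle W, \nabla^M\theta_X\rangle = \tfrac12\langle W, L_X g\rangle$ for symmetric $W$, which I would verify in a local orthonormal frame: one has $(L_X g)(Y,Z) = g(\nabla_Y X, Z) + g(Y, \nabla_Z X) = (\nabla^M\theta_X)(Y,Z) + (\nabla^M\theta_X)(Z,Y)$, so $L_X g$ is exactly twice the symmetric part of $\nabla^M\theta_X$; contracting against a symmetric tensor $W$ kills the antisymmetric part of $\nabla^M\theta_X$ and leaves $\langle W, \nabla^M\theta_X\rangle = \langle W, \tfrac12 L_X g\rangle$. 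The derivation of $\operatorname{div}(i_X W) = (\operatorname{div} W)(X) + \langle W,\nabla^M\theta_X\rangle$ itself is the short computation $\operatorname{div}(i_X W) = \sum_i (\nabla_{e_i}(i_X W))(e_i) = \sum_i \big[(\nabla_{e_i}W)(e_i,X) + W(e_i, \nabla_{e_i}X)\big] = (\operatorname{div} W)(X) + \langle W, \nabla^M\theta_X\rangle$, using the definition \eqref{1033} of $\operatorname{div} W$ and $(\nabla^M\theta_X)(e_i,e_i) = g(\nabla_{e_i}X, e_i)$.

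There is no real obstacle here: the statement is a direct packaging of the preceding pointwise computation with Stokes' theorem, and the mild regularity hypothesis ($C^1$ boundary, bounded domain) is exactly what is needed to apply the divergence theorem. The main thing to be careful about is bookkeeping — keeping the sign conventions for $\operatorname{div}$, the outward normal $\nu$, and the interior product $i_X$ consistent with \eqref{1033} and with the convention used later in the paper — so that the formula can be applied cleanly when $W = S_{u,p}^{GL}$ and $\operatorname{div} W = 0$ by Corollary \ref{1034}.
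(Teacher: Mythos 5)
Your proposal is correct and follows exactly the route the paper intends: the pointwise identity $\operatorname{div}(i_XW)=(\operatorname{div}W)(X)+\langle W,\nabla^M\theta_X\rangle=(\operatorname{div}W)(X)+\tfrac12\langle W,L_Xg\rangle$ displayed just before the lemma, combined with the divergence (Stokes) theorem on $D$. You simply spell out the details (the frame computation and the symmetrization argument) that the paper leaves implicit, so there is nothing to add.
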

Applying Lemma \ref{201} to $S_{u,p}^{GL}$, we immediately obtain the following integral formulae:
\begin{align}
\int_{\partial D}S_{u,p}^{GL}(X,\nu)ds_g=\int_D\langle S_{u,p}^{GL},\nabla \theta_X\rangle+(divS_{u,p}^{GL})(X) dv_g \nonumber
\end{align}
and
\begin{align}
\int_{\partial D}S_{u,p}^{GL}(X,\nu)ds_g=\int_D\frac{1}{2}\langle S_{u,p}^{GL},L_Xg\rangle+(divS_{u,p}^{GL})(X) dv_g. \nonumber
\end{align}
If $u$ is a critical point of $p$-Ginzburg-Landau energy functional, by Corollary \ref{1034} we obtain
\begin{align}
\int_{\partial D}S_{u,p}^{GL}(X,\nu)ds_g&=\int_D\langle S_{u,p}^{GL},\nabla \theta_X\rangle dv_g.  \nonumber \\
&=\int_D\frac{1}{2}\langle S_{u,p}^{GL},L_Xg\rangle dv_g. \label{1006}
\end{align}
\indent For applications of the stress-energy tensor, the readers may refer to \cite{DL,DLY,DW,Xin}. In next section, we will use the similar method to establish the monotonicity formulae.

\section{Monotonicity formulae and Liouville type results under growth
conditions.}
\ \ \ \ From now on, we always assume that ($M^m,g$) is a complete Riemannian manifold with a pole $x_0$. A pole $x_0 \in M$ is a point such that the exponential map from the tangent space to $M$ at $x_0$ into $M$ is a diffeomorphism.  We will establish monotonicity formulae on these mainfolds.\\
\indent Denote by $r(x)$ the distance function relative to the pole $x_0$. For any $x\in M$, let $\lambda_1(x)\leq \lambda_2(x)\leq \cdots \leq \lambda_m(x)$ be the eigenvalues of $Hess_g(r^2)$ at $x$.
\begin{theorem}\label{1014}
Assume that $u:(M^m,g)\rightarrow (R^n,h)$ is the critical point of $p$-Ginzburg-Landau energy functional.
 If there exists a constant $\sigma>0$ such that \begin{align}
(P_1)\ \ \ \ \ \ \ \frac{1}{2}(\sum_{i=1}^m\lambda_i-p\lambda_m)\geq \sigma, \nonumber
\end{align}
then
\begin{align}
\frac{\int_{B_{\rho_1}(x_0)}\frac{|du|^p}{p}+\dfrac{1}{4\epsilon^n}(1-|u|^2)^2 dv_g }{\rho_1^\sigma}\leq 
\frac{\int_{B_{\rho_2}(x_0)}\frac{|du|^p}{p}+\dfrac{1}{4\epsilon^n}(1-|u|^2)^2 dv_g }{\rho_2^\sigma}, \nonumber
\end{align}
for any $0<\rho_1\leq \rho_2$.
\end{theorem}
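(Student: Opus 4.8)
The plan is to apply the integral identity \eqref{1006} on the geodesic ball $D=B_\rho(x_0)$ with the radial vector field $X=\tfrac12\nabla^M(r^2)=r\,\nabla^M r$, and then to convert the resulting inequality into a differential inequality for
\[
I(\rho):=\int_{B_\rho(x_0)}\Bigl[\tfrac{|du|^p}{p}+\tfrac{1}{4\epsilon^n}(1-|u|^2)^2\Bigr]\,dv_g .
\]
Since $u$ is a critical point, Corollary \ref{1034} gives the conservation law, so \eqref{1006} holds; the whole argument then rests on estimating the interior integrand $\langle S_{u,p}^{GL},\nabla^M\theta_X\rangle$ from below and the boundary integrand $S_{u,p}^{GL}(X,\nu)$ from above.

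For the interior term, note $\theta_X=\tfrac12 d(r^2)$, hence $\nabla^M\theta_X=\tfrac12 Hess_g(r^2)$. Choosing an orthonormal frame $\{e_i\}$ diagonalizing $Hess_g(r^2)$ with eigenvalues $\lambda_i$, the definition of $S_{u,p}^{GL}$ gives
\[
\bigl\langle S_{u,p}^{GL},\nabla^M\theta_X\bigr\rangle=\tfrac12\Bigl\{\bigl[\tfrac{|du|^p}{p}+\tfrac{1}{4\epsilon^n}(1-|u|^2)^2\bigr]\textstyle\sum_i\lambda_i-|du|^{p-2}\sum_i\lambda_i|du(e_i)|^2\Bigr\}.
\]
Bounding $\sum_i\lambda_i|du(e_i)|^2\le\lambda_m|du|^2$ and using $|du|^2=p\cdot\tfrac{|du|^p}{p}\cdot|du|^{2-p}$, i.e. $|du|^{p-2}|du|^2=|du|^p$, the bracket regroups as $\tfrac{|du|^p}{p}\bigl(\sum_i\lambda_i-p\lambda_m\bigr)+\tfrac{1}{4\epsilon^n}(1-|u|^2)^2\sum_i\lambda_i$. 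Assumption $(P_1)$ gives $\sum_i\lambda_i-p\lambda_m\ge 2\sigma$; moreover on $M\setminus\{x_0\}$ the radial direction $\partial_r$ is an eigenvector of $Hess_g(r^2)$ of eigenvalue $2$, so $\lambda_m\ge 2>0$, which forces $\sum_i\lambda_i\ge\sum_i\lambda_i-p\lambda_m\ge2\sigma$ as well. Since $|du|^p\ge0$ and $(1-|u|^2)^2\ge0$, this yields $\langle S_{u,p}^{GL},\nabla^M\theta_X\rangle\ge\sigma\bigl[\tfrac{|du|^p}{p}+\tfrac{1}{4\epsilon^n}(1-|u|^2)^2\bigr]$.

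For the boundary term, on $\partial B_\rho(x_0)$ one has $\nu=\nabla^M r$ and $X=\rho\,\nu$, so $S_{u,p}^{GL}(X,\nu)=\rho\bigl[\tfrac{|du|^p}{p}+\tfrac{1}{4\epsilon^n}(1-|u|^2)^2-|du|^{p-2}|du(\nu)|^2\bigr]\le\rho\bigl[\tfrac{|du|^p}{p}+\tfrac{1}{4\epsilon^n}(1-|u|^2)^2\bigr]$. Feeding both estimates into \eqref{1006} (with Lemma \ref{201} applied to the smooth domain $B_\rho(x_0)$, which is legitimate because $x_0$ is a pole) gives $\sigma I(\rho)\le\rho\int_{\partial B_\rho(x_0)}\bigl[\tfrac{|du|^p}{p}+\tfrac{1}{4\epsilon^n}(1-|u|^2)^2\bigr]ds_g$. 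Because $|\nabla^M r|\equiv1$ and $r$ has no critical point off the pole, the coarea formula gives $I'(\rho)=\int_{\partial B_\rho(x_0)}\bigl[\tfrac{|du|^p}{p}+\tfrac{1}{4\epsilon^n}(1-|u|^2)^2\bigr]ds_g$ for a.e.\ $\rho$. Hence $\sigma I(\rho)\le\rho I'(\rho)$, i.e. $\tfrac{d}{d\rho}\log\!\bigl(I(\rho)\rho^{-\sigma}\bigr)\ge0$, and integrating from $\rho_1$ to $\rho_2$ produces the asserted inequality.

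The only genuinely delicate points are the two structural facts about manifolds with a pole used above: that $\partial_r$ is an eigenvector of $Hess_g(r^2)$ with eigenvalue $2$ (so $\lambda_m>0$, needed to pass from $\sum_i\lambda_i-p\lambda_m\ge2\sigma$ to $\sum_i\lambda_i\ge2\sigma$ when handling the potential term), and that each $B_\rho(x_0)$ is a relatively compact domain with smooth boundary so that Lemma \ref{201} and the coarea formula apply. Everything else is the algebraic regrouping of the stress–energy tensor and the elementary integration of the ODE inequality $\sigma I\le\rho I'$.
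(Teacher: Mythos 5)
Your proposal is correct and follows essentially the same route as the paper: apply the integral identity \eqref{1006} with $X=\tfrac12\nabla r^2$ on geodesic balls, bound the interior term below by $\sigma\bigl[\tfrac{|du|^p}{p}+\tfrac{1}{4\epsilon^n}(1-|u|^2)^2\bigr]$ and the boundary term above by $\rho$ times the energy density, then integrate the resulting inequality $\sigma I(\rho)\le\rho I'(\rho)$. The only (harmless) difference is bookkeeping in the interior estimate: the paper replaces $|du|^p\lambda_m$ by $p\lambda_m$ times the full density, while you regroup and use $\sum_i\lambda_i\ge 2\sigma$; both steps rest on $\lambda_m\ge 0$, a fact the paper leaves implicit and you correctly justify via the radial eigenvalue $2$ of $Hess_g(r^2)$.
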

\begin{proof}
Set $D=B_R(x_0)=\{x\in M|r(x)\leq R\}$ and $X=\frac{1}{2}\nabla r^2=r\frac{\partial}{\partial r}$. Since $u$ is a critical point, use (\ref{1006}) we have
\begin{align}
\int_{\partial B_R(x_0)}S_{u,p}^{GL}(X,\nu)ds_g=\int_{B_R(x_0)}\frac{1}{2}\langle S_{u,p}^{GL},L_Xg\rangle dv_g, \label{1009}
\end{align}
where $\nu=\frac{\partial}{\partial r}$ is the unit outward normal vector field of $B_R(x_0)$.\\
\indent Let $\{e_i\}_{i=1}^m$ be an orthonormal frame of ($M,g$). Moreover, we can assume that $Hess_{g}(r^2)$ becomes a diagonal matrix with respect to $\{e_i\}_{i=1}^m$.
\begin{align}
&\langle S_{u,p}^{GL},\frac{1}{2}L_X g\rangle
=\frac{1}{2}\langle S_{u,p}^{GL},Hess_g(r^2)\rangle  \nonumber \\
&=\frac{1}{2}(\dfrac{|du|^p}{p}+\dfrac{1}{4\epsilon^n}(1-|u|^2)^2)\langle g ,Hess_g(r^2)\rangle
-\frac{1}{2}|du|^{p-2}\langle u^{\ast}h,Hess_g(r^2)\rangle \nonumber \\
&=\frac{1}{2}(\dfrac{|du|^p}{p}+\dfrac{1}{4\epsilon^n}(1-|u|^2)^2)
\sum_{i,j=1}^{m}g(e_i,e_j)\cdot Hess_g(r^2)(e_i,e_j) \nonumber  \\
&\ \ \ \ \ \ \ \ \ \ \ \ \ \ \ \ \ \ \ \ \ \ \  -\frac{1}{2}|du|^{p-2}\sum_{i,j=1}^{m} u^{\ast}h(e_i,e_j)\cdot Hess_g(r^2)(e_i,e_j) \nonumber \\
&\geq \frac{1}{2}(\dfrac{|du|^p}{p}+\dfrac{1}{4\epsilon^n}(1-|u|^2)^2)
\sum_{i=1}^{m}\lambda_i-\frac{1}{2}|du|^{p}\lambda_{m} \nonumber \\
&\geq \frac{1}{2}(\dfrac{|du|^p}{p}+\dfrac{1}{4\epsilon^n}(1-|u|^2)^2)
\sum_{i=1}^{m}\lambda_i-\frac{1}{2}(\frac{|du|^{p}}{p}+\dfrac{1}{4\epsilon^n}(1-|u|^2)^2)p\lambda_{m} \nonumber \\
&=\frac{1}{2}(\dfrac{|du|^p}{p}+\dfrac{1}{4\epsilon^n}(1-|u|^2)^2)(\sum_{i=1}^{m}\lambda_i-p\lambda_{m}). \label{1010}
\end{align}
On the other hand,
\begin{align}
\int_{\partial B_R(x_0)} S_{u,p}^{GL}(r\frac{\partial}{\partial r},\nu)ds_g
&\leq \int_{\partial B_R(x_0)}(\dfrac{|du|^p}{p}+\dfrac{1}{4\epsilon^n}(1-|u|^2)^2)g(r\frac{\partial}{\partial r},\nu)ds_g\nonumber\\
&= R\int_{\partial B_R(x_0)}(\dfrac{|du|^p}{p}+\dfrac{1}{4\epsilon^n}(1-|u|^2)^2)g(\frac{\partial}{\partial r},\frac{\partial}{\partial r})ds_g\nonumber\\
&= R\frac{d}{dR}\int_0^R(\int_{\partial B_r(x_0)}(\dfrac{|du|^p}{p}+\dfrac{1}{4\epsilon^n}(1-|u|^2)^2)ds_g)dr\nonumber\\
&= R\frac{d}{dR}\int_{ B_R(x_0)}(\dfrac{|du|^p}{p}+\dfrac{1}{4\epsilon^n}(1-|u|^2)^2)dv_g.\label{1011}
\end{align}
It follows from (\ref{1009}), (\ref{1010}) and (\ref{1011}) that
\begin{align}
&R\frac{d}{dR}\int_{ B_R(x_0)}(\dfrac{|du|^p}{p}+\dfrac{1}{4\epsilon^n}(1-|u|^2)^2)dv_g \nonumber  \\
&\geq\int_{ B_R(x_0)}\frac{1}{2}(\sum_{i}^{m}\lambda_i-p\lambda_{m})(\dfrac{|du|^p}{p}+\dfrac{1}{4\epsilon^n}(1-|u|^2)^2)dv_g.\label{1015}
\end{align}
By condition ($P_1$), we obtain
\begin{align}
R\frac{d}{dR}\int_{ B_R(x_0)}(\dfrac{|du|^p}{p}+\dfrac{1}{4\epsilon^n}(1-|u|^2)^2)dv_g\geq
\sigma\int_{ B_R(x_0)}(\dfrac{|du|^p}{p}+\dfrac{1}{4\epsilon^n}(1-|u|^2)^2)dv_g. \nonumber
\end{align}
Integrating the above formula on $[\rho_1,\rho_2]$, finally, we can get
\begin{align}
\dfrac{\int_{B_{\rho_1}(x_0)}\dfrac{|du|^p}{p}+\dfrac{1}{4\epsilon^n}(1-|u|^2)^2 dv_g }{\rho_1^\sigma}\leq
\dfrac{\int_{B_{\rho_2}(x_0)}\dfrac{|du|^p}{p}+\dfrac{1}{4\epsilon^n}(1-|u|^2)^2 dv_g }{\rho_2^\sigma}. \nonumber
\end{align}
\end{proof}

Next we list some vanishing results which are immediate applications of the monotonicity formulae.
\begin{theorem}
Under the same condition of Theorem \ref{1014} and
\begin{align}
\int_{B_r(x_0)}\frac{|du|^p}{p}+\frac{1}{4\epsilon^n}(1-|u|^2)^2dv_g=o(r^{\sigma}), \nonumber
\end{align}
then $du=0$, that is, $u$ is a constant.
\end{theorem}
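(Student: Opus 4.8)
The plan is to read the conclusion straight off the monotonicity formula of Theorem \ref{1014} combined with the hypothesized growth rate. Write
\[
E(\rho):=\int_{B_\rho(x_0)}\frac{|du|^p}{p}+\frac{1}{4\epsilon^n}(1-|u|^2)^2\,dv_g
\]
for the $p$-Ginzburg-Landau energy on the geodesic ball $B_\rho(x_0)$. Condition $(P_1)$ is assumed, so Theorem \ref{1014} applies and tells us that $\rho\mapsto E(\rho)/\rho^\sigma$ is non-decreasing on $(0,\infty)$.

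First I would fix an arbitrary radius $\rho_1>0$. For every $\rho_2\geq \rho_1$ the monotonicity formula gives
\[
\frac{E(\rho_1)}{\rho_1^\sigma}\ \leq\ \frac{E(\rho_2)}{\rho_2^\sigma}.
\]
The hypothesis $E(\rho_2)=o(\rho_2^\sigma)$ means exactly that $E(\rho_2)/\rho_2^\sigma\to 0$ as $\rho_2\to\infty$. Letting $\rho_2\to\infty$ in the displayed inequality forces $E(\rho_1)/\rho_1^\sigma\leq 0$, and since the integrand is nonnegative we have $E(\rho_1)\geq 0$, hence $E(\rho_1)=0$. As $\rho_1>0$ was arbitrary, $E(\rho)=0$ for every $\rho>0$.

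Finally, since the integrand $\frac{|du|^p}{p}+\frac{1}{4\epsilon^n}(1-|u|^2)^2$ is nonnegative and smooth on $M$, the vanishing of its integral over every ball $B_\rho(x_0)$ forces it to vanish identically on $M=\bigcup_{\rho>0}B_\rho(x_0)$. In particular $|du|\equiv 0$, i.e. $du=0$; because $M$ has a pole it is connected, so $u$ is a constant map (and, incidentally, $|u|\equiv 1$). There is no genuinely hard step here: the only point needing a little care is the passage to the limit $\rho_2\to\infty$, where one must use that the growth assumption is a little-$o$ statement rather than a big-$O$ one — an $O(\rho^\sigma)$ bound would only yield boundedness of $E(\rho)/\rho^\sigma$, whereas the strict smallness encoded in $o(\rho^\sigma)$ is precisely what drives the ratio to zero and closes the argument.
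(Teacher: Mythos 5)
Your argument is correct and is essentially the same as the paper's: both fix a radius, apply the monotonicity formula of Theorem \ref{1014}, let the outer radius tend to infinity so that the $o(r^{\sigma})$ hypothesis kills the ratio, and conclude that the energy vanishes on every ball, hence $du=0$. No differences worth noting.
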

\begin{proof}
By Theorem \ref{1014}, for any $0<\rho<r$, we have the following inequality
\begin{align}
\frac{1}{\rho^{\sigma}}\int_{B_{\rho}(x_0)}\frac{|du|^p}{p}+\frac{1}{4\epsilon^n}(1-|u|^2)^2dv_g\leq\frac{1}{r^{\sigma}}\int_{B_{r}(x_0)}\frac{|du|^p}{p}+\frac{1}{4\epsilon^n}(1-|u|^2)^2dv_g. \nonumber
\end{align}
Letting $r\rightarrow +\infty$, under the the assumption, we may conclude
\begin{align}
\int_{B_{\rho}(x_0)}\frac{|du|^p}{p}+\frac{1}{4\epsilon^n}(1-|u|^2)^2dv_g=0. \nonumber
\end{align}
Since $\rho$ is arbitrary, then $du=0$.
\end{proof}

Next, we will introduce some comparison theorems in Riemannian geometry.
\begin{lemma}(cf.\cite{DW,GW,prs})\label{1028}
Let ($M,g$) be a complete Riemannian manifold with a pole $x_0$ and let $r$ be the distance function relative to $x_0$. Denote by $K_r$ the radial curvature of $M$.
\begin{enumerate}[(i)]
\item If $-\alpha^2\leq K_r\leq-\beta^2$ with $\alpha>0$, $\beta>0$, then
\begin{align}
\beta coth(\beta r)[g-dr \otimes dr]\leq Hess_g(r)\leq\alpha coth(\alpha r)[g-dr\otimes dr]. \nonumber
\end{align}
\item If $-\frac{A}{(1+r^2)^{1+\varepsilon}}\leq K_r\leq \frac{B}{(1+r^2)^{1+\varepsilon}}$ with $\varepsilon>0$, $A\geq 0$, $0\leq B <2\varepsilon$, then
\begin{align}
\frac{1-\frac{B}{2\varepsilon}}{r}\leq Hess_g(r)\leq\frac{e^{\frac{A}{2\varepsilon}}}{r}[g-dr\otimes dr]. \nonumber
\end{align}
\item If $-\frac{a^2}{1+r^2}\leq K_r\leq\frac{b^2}{1+r^2}$ with $a\geq0$, $b^2\in[0,\frac{1}{4}]$, then
\begin{align}
\frac{1+\sqrt{1-4b^2}}{2r}[g-dr\otimes dr]\leq Hess_g(r)\leq \frac{1+\sqrt{1+4a^2}}{2r}[g-dr\otimes dr]. \nonumber
\end{align}
\end{enumerate}
\end{lemma}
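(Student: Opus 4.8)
The plan is to derive all three estimates from the Hessian comparison theorem for the distance function, that is, from Sturm--Riccati comparison applied to the radial Jacobi equation. Since $x_{0}$ is a pole, $\exp_{x_{0}}$ is a global diffeomorphism, so $r$ is smooth on $M\setminus\{x_{0}\}$, the field $\partial/\partial r=\nabla r$ is globally defined, $Hess_{g}(r)(\partial/\partial r,\cdot)=0$, and on $(\partial/\partial r)^{\perp}$ the tensor $Hess_{g}(r)$ is the shape operator $S$ of the geodesic spheres, which obeys the matrix Riccati equation $S'+S^{2}+\mathcal{R}_{r}=0$ along each unit-speed radial geodesic, $\mathcal{R}_{r}$ being the radial curvature operator. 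Let $f_{\kappa}$ solve $f''+\kappa f=0$, $f(0)=0$, $f'(0)=1$. Then, wherever $f_{\kappa}>0$: if $K_{r}\le\kappa(r)$ along the geodesic one has $Hess_{g}(r)\ge\dfrac{f_{\kappa}'}{f_{\kappa}}\,[g-dr\otimes dr]$, and if $K_{r}\ge\kappa(r)$ the reverse inequality holds. In each of (i)--(iii) I would apply this twice, with $\kappa$ equal to the prescribed upper bound for $K_{r}$ (producing the lower estimate for $Hess_{g}(r)$) and then to the prescribed lower bound for $K_{r}$ (producing the upper estimate), so that everything reduces to estimating $f_{\kappa}'/f_{\kappa}$ for the relevant coefficients $\kappa$.

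Part (i) is then immediate: for $\kappa\equiv-\beta^{2}$ one has $f_{-\beta^{2}}(r)=\beta^{-1}\sinh(\beta r)$, hence $f_{-\beta^{2}}'/f_{-\beta^{2}}=\beta\coth(\beta r)$, and for $\kappa\equiv-\alpha^{2}$ likewise $f_{-\alpha^{2}}'/f_{-\alpha^{2}}=\alpha\coth(\alpha r)$; both are positive for all $r>0$, giving the two-sided bound in (i). For (ii) and (iii) the coefficient $\kappa$ is not constant, and the substitution $c(r)=r\,f_{\kappa}'(r)/f_{\kappa}(r)$ is convenient: from the scalar Riccati equation $v'+v^{2}+\kappa=0$ for $v=f_{\kappa}'/f_{\kappa}$ one obtains
\begin{align}
c'(r)=\frac{c(1-c)}{r}-\kappa(r)\,r,\qquad \lim_{r\to0^{+}}c(r)=1,\nonumber
\end{align}
the limit coming from $f_{\kappa}(r)=r+O(r^{3})$ near $0$. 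For (iii), using $r/(1+r^{2})\le 1/r$ for $r>0$: when $\kappa(r)=b^{2}/(1+r^{2})$ this gives $c'\ge\big(c(1-c)-b^{2}\big)/r=-\big(c-t_{+}\big)\big(c-t_{-}\big)/r$ with $t_{\pm}=\tfrac12\big(1\pm\sqrt{1-4b^{2}}\big)$ (real since $b^{2}\le\frac14$), and a first-exit argument at $\inf\{r:c(r)<t_{+}\}$ together with $c(0^{+})=1\ge t_{+}$ forces $c\ge t_{+}$; symmetrically, for $\kappa(r)=-a^{2}/(1+r^{2})$ one gets $c'\le-\big(c-s_{+}\big)\big(c-s_{-}\big)/r$ with $s_{\pm}=\tfrac12\big(1\pm\sqrt{1+4a^{2}}\big)$, and one deduces $c\le s_{+}$ because otherwise $c$ would blow up as $r\to0^{+}$. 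Dividing by $r$ gives (iii). For (ii) the same device works, now exploiting $\int_{0}^{\infty}r/(1+r^{2})^{1+\varepsilon}\,dr=\tfrac1{2\varepsilon}$: discarding the favorably-signed term $c(1-c)/r$ one integrates $c'\ge-\kappa r$ (when $K_{r}\le B(1+r^{2})^{-1-\varepsilon}$) to get $c(r)\ge1-B/(2\varepsilon)$, and $c'\le|\kappa|\,r$ (when $K_{r}\ge-A(1+r^{2})^{-1-\varepsilon}$) to get $c(r)\le1+A/(2\varepsilon)\le e^{A/(2\varepsilon)}$; dividing by $r$ gives (ii).

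The main obstacle is exactly this ODE analysis for (ii) and (iii): one must control the Riccati solution near the singular point $r=0$ where $v\sim1/r$, bootstrap the above inequalities to ensure $f_{\kappa}>0$ (equivalently $c>0$) on all of $(0,\infty)$ so that $v$ and $c$ are globally defined, and push the first-exit/comparison arguments through so that the precise constants $\tfrac12\big(1\pm\sqrt{1\pm4(\cdot)}\big)$, $1-B/(2\varepsilon)$, $e^{A/(2\varepsilon)}$ appear. Everything else is the standard Hessian comparison machinery, so one may alternatively simply invoke \cite{DW,GW,prs}.
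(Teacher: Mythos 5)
Your argument is correct in outline, but note that the paper does not actually prove this lemma: its stated ``proof'' consists only of citations --- case (i) to \cite{GW}, case (ii) to \cite{DW}, case (iii) to \cite{GW,Kas,prs} --- so what you have done is reconstruct the comparison-theorem arguments contained in those references rather than follow a genuinely different route. Your reduction to the radial Jacobi/Riccati comparison and the substitution $c(r)=r f_\kappa'(r)/f_\kappa(r)$ with $c'=c(1-c)/r-\kappa(r)\,r$ and $c(0^+)=1$ is exactly the mechanism used in \cite{DW} and \cite{prs}; part (i) is immediate as you say, and in (iii) both the first-exit argument giving $c\ge t_+$ and the backward argument giving $c\le s_+$ (where $c>s_+$ at some $r_1$ would force $c'\le -m/r$ with $m=(c(r_1)-s_+)(c(r_1)-s_-)>0$, hence a logarithmic divergence contradicting $c(0^+)=1$) go through. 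The one step you should make explicit is in (ii): discarding the term $c(1-c)/r$ is legitimate only after you know $c\le 1$ in the lower-bound case ($\kappa\ge 0$) and $c\ge 1$ in the upper-bound case ($\kappa\le 0$); both follow from the concavity, respectively convexity, of $f_\kappa$ (equivalently from the sign of $c'=-\kappa r$ at $c=1$), and together with $\int_0^\infty \kappa(s)\,s\,ds\le B/(2\varepsilon)<1$ this supplies the bootstrap keeping $c\in[1-\tfrac{B}{2\varepsilon},1]$ and hence $f_\kappa>0$ on all of $(0,\infty)$, which you correctly flag as the main obstacle but do not carry out. Incidentally, your computation yields the slightly sharper bound $c\le 1+\tfrac{A}{2\varepsilon}$, which implies the stated $e^{A/(2\varepsilon)}/r$ estimate since $1+x\le e^x$; also, the absence of the factor $[g-dr\otimes dr]$ on the lower bound in (ii) as printed is a typographical slip in the statement, not something your argument needs to reproduce.
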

\begin{proof}
The case $(i)$ is standard (cf. \cite{GW}). The case $(ii)$ is discussed in \cite{DW}. For $(iii)$, see \cite{GW,Kas,prs} for details.
\end{proof}

\begin{lemma}\label{1027}
Let $(M,g)$ be a complete Riemannian manifold with a pole $x_0$ and let $r$ be the distance function relative to $x_0$. Assume that there exist two positive functions $h_1(r)$ and $h_2(r)$ such that
\begin{align}
h_1(r)[g-dr \otimes dr]\leq Hess_g(r)\leq h_2(r)[g-dr\otimes dr]\ \ \ \ \  and\ \ \ \ \  rh_2(r)\geq 1, \nonumber
\end{align}
then
\begin{align}
\sum_{i=1}^{m}\lambda_i-p\lambda_m\geq2\{1+(m-1)rh_1(r)-prh_2(r)\}.
\end{align}
\end{lemma}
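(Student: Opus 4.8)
The plan is to reduce everything to the elementary identity relating $Hess_g(r^2)$ and $Hess_g(r)$. Recall that for the distance function $r$ from the pole one has $d(r^2)=2r\,dr$, and hence
\begin{align}
Hess_g(r^2)=2\,dr\otimes dr+2r\,Hess_g(r), \nonumber
\end{align}
while $Hess_g(r)(\nabla r,\cdot)=0$ because $|\nabla r|\equiv 1$. First I would fix, at the point $x$ under consideration, an orthonormal frame $\{e_i\}_{i=1}^m$ with $e_m=\partial/\partial r$ that diagonalizes $Hess_g(r)$ (equivalently $Hess_g(r^2)$). Then $Hess_g(r^2)$ has the eigenvalue $2$ in the radial direction $e_m$, and on $\partial_r^{\perp}=\mathrm{span}\{e_1,\dots,e_{m-1}\}$ its eigenvalues are $2r\mu_1\le\cdots\le 2r\mu_{m-1}$, where $\mu_1\le\cdots\le\mu_{m-1}$ denote the eigenvalues of $Hess_g(r)$ restricted to $\partial_r^{\perp}$.

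Next I would feed in the pinching hypothesis. Since $h_1(r)[g-dr\otimes dr]\le Hess_g(r)\le h_2(r)[g-dr\otimes dr]$, each $\mu_j$ satisfies $h_1(r)\le\mu_j\le h_2(r)$, so the multiset of eigenvalues $\{\lambda_1,\dots,\lambda_m\}$ of $Hess_g(r^2)$ is contained in $\{2\}\cup[2rh_1(r),\,2rh_2(r)]$. The assumption $rh_2(r)\ge 1$ enters precisely here: it guarantees $2\le 2rh_2(r)$, so that \emph{every} eigenvalue, and in particular the largest one $\lambda_m$, is bounded above by $2rh_2(r)$. On the other hand, exactly one eigenvalue equals $2$ and the remaining $m-1$ eigenvalues are each $\ge 2rh_1(r)$, whence
\begin{align}
\sum_{i=1}^m\lambda_i\ \ge\ 2+2(m-1)rh_1(r). \nonumber
\end{align}

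Finally, combining the two bounds and using $p\ge 1>0$,
\begin{align}
\sum_{i=1}^m\lambda_i-p\lambda_m\ \ge\ 2+2(m-1)rh_1(r)-2prh_2(r)\ =\ 2\{1+(m-1)rh_1(r)-prh_2(r)\}, \nonumber
\end{align}
which is the claimed inequality. The computation is short; the one genuinely delicate point is the upper bound on $\lambda_m$, since a priori the largest eigenvalue of $Hess_g(r^2)$ could be the radial eigenvalue $2$ rather than one of the $2r\mu_j$, and it is exactly the hypothesis $rh_2(r)\ge 1$ that rules this out and forces $\lambda_m\le 2rh_2(r)$ in all cases.
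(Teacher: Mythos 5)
Your argument is correct: the identity $Hess_g(r^2)=2\,dr\otimes dr+2r\,Hess_g(r)$, the resulting eigenvalue list $\{2,\,2r\mu_1,\dots,2r\mu_{m-1}\}$, and the observation that $rh_2(r)\geq 1$ is exactly what forces $\lambda_m\leq 2rh_2(r)$ yield the claimed bound $\sum_{i=1}^m\lambda_i-p\lambda_m\geq 2\{1+(m-1)rh_1(r)-prh_2(r)\}$. The paper states this lemma without proof, and your computation is precisely the standard argument it implicitly relies on (the same diagonalization of $Hess_g(r^2)$ appears in the proof of Theorem \ref{1014}), so your route coincides with the intended one.
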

Combing Lemma \ref{1028} and Lemma \ref{1027}, we can obtain the following.
\begin{lemma} \label{1029}
Let ($M,g$) be a complete Riemannian manifold with a pole $x_0$ and let $r$ be the distance function relative to $x_0$. Denote by $K_r$ the radial curvature of $M$.
\begin{enumerate}[(i)]
\item If $-\alpha^2\leq K_r\leq-\beta^2$ with $\alpha>0$, $\beta>0$ and $(m-1)\beta-p\alpha >0$, then
\begin{align}
\sum_{i}^{m}\lambda_i-p\lambda_{m}\geq 2(m-p\frac{\alpha}{\beta}). \nonumber
\end{align}
\item If $-\frac{A}{(1+r^2)^{1+\varepsilon}}\leq K_r\leq \frac{B}{(1+r^2)^{1+\varepsilon}}$ with $\varepsilon>0$, $A\geq 0$, $0\leq B <2\varepsilon$ and  $1+(m-1)(1-\frac{B}{2\varepsilon})-pe^{\frac{A}{2\varepsilon}}>0$, then
\begin{align}
\sum_{i}^{m}\lambda_i-p\lambda_{m}\geq 2[1+(m-1)(1-\frac{B}{2\varepsilon})-e^{\frac{A}{2\varepsilon}}p]. \nonumber
\end{align}
\item If $-\frac{a^2}{1+r^2}\leq K_r\leq\frac{b^2}{1+r^2}$ with $a\geq0$, $b^2\in[0,\frac{1}{4}]$ and $2+(m-1)(1+\sqrt{1-4b^2})-p(1+\sqrt{1+4a^2})>0$, then
\begin{align}
\sum_{i}^{m}\lambda_i-p\lambda_{m}\geq 2[1-\frac{p}{2}+(m-1)\frac{1+\sqrt{1-4b^2}}{2}-\frac{p}{2}\sqrt{1+4a^2}]. \nonumber
\end{align}
\end{enumerate}
\end{lemma}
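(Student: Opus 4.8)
The plan is to prove all three cases by the same two-step recipe. First, for each case I invoke the Hessian comparison of Lemma~\ref{1028} to obtain explicit positive functions $h_1(r),h_2(r)$ with $h_1(r)[g-dr\otimes dr]\le Hess_g(r)\le h_2(r)[g-dr\otimes dr]$. Second, after checking the side condition $rh_2(r)\ge 1$, I feed this pinching into Lemma~\ref{1027} to get $\sum_i\lambda_i-p\lambda_m\ge 2\{1+(m-1)rh_1(r)-prh_2(r)\}$, and then I simplify the right-hand side to the stated $r$-independent constant. The structural positivity hypotheses imposed in each case of the statement are exactly what guarantees that the resulting constant (hence the lower bound for $\sum_i\lambda_i-p\lambda_m$) is positive.

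For cases (ii) and (iii) this is essentially immediate, because there $h_1$ and $h_2$ are of the form $c/r$, so $rh_1(r)$ and $rh_2(r)$ are already constants. In (ii) one has $rh_1(r)=1-\frac{B}{2\varepsilon}>0$ (the hypothesis $B<2\varepsilon$) and $rh_2(r)=e^{A/2\varepsilon}\ge 1$ (the hypothesis $A\ge 0$), so Lemma~\ref{1027} applies and a one-line rearrangement of $2\{1+(m-1)rh_1-prh_2\}$ gives the displayed bound. In (iii) one has $rh_1(r)=\tfrac12(1+\sqrt{1-4b^2})>0$ (valid since $b^2\le\tfrac14$) and $rh_2(r)=\tfrac12(1+\sqrt{1+4a^2})\ge 1$ (since $a\ge 0$), which again both verifies the side condition and, after rearrangement, yields $2[\,1-\tfrac p2+(m-1)\tfrac{1+\sqrt{1-4b^2}}{2}-\tfrac p2\sqrt{1+4a^2}\,]$.

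The only case requiring a genuine estimate is (i), where $h_1(r)=\beta\coth(\beta r)$, $h_2(r)=\alpha\coth(\alpha r)$, so $rh_1(r)=\beta r\coth(\beta r)$ and $rh_2(r)=\alpha r\coth(\alpha r)$ genuinely depend on $r$ and must be turned into a uniform constant. The elementary facts I would use about $g(x):=x\coth x$ on $(0,\infty)$ are: (a) $g$ is increasing with $g(x)\ge g(0^+)=1$ (equivalently $\sinh(2x)>2x$); and (b) for every $c\ge 1$ one has $g(cx)\le c\,g(x)$, which reduces to $\coth(cx)\le\coth x$, true because $\coth$ is decreasing. Since the curvature pinching $-\alpha^2\le K_r\le-\beta^2$ forces $\alpha\ge\beta$, take $c=\alpha/\beta\ge 1$: then $rh_2(r)=g(\alpha r)=g(c\cdot\beta r)\le\tfrac{\alpha}{\beta}\,g(\beta r)=\tfrac{\alpha}{\beta}\,rh_1(r)$, and in particular $rh_2(r)\ge rh_1(r)\ge 1$, so Lemma~\ref{1027} applies. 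Plugging in,
\[
\sum_i\lambda_i-p\lambda_m\ \ge\ 2\Big\{1+(m-1)g(\beta r)-p\,g(\alpha r)\Big\}\ \ge\ 2\Big\{1+\big[(m-1)-p\tfrac{\alpha}{\beta}\big]g(\beta r)\Big\},
\]
and since $(m-1)-p\alpha/\beta=\big((m-1)\beta-p\alpha\big)/\beta>0$ by hypothesis while $g(\beta r)\ge 1$, the right-hand side is $\ge 2\{1+(m-1)-p\alpha/\beta\}=2(m-p\tfrac{\alpha}{\beta})$, which is the claim.

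I expect the main (indeed the only real) obstacle to be case (i): one must pass from the $r$-dependent $\coth$-comparison to a constant lower bound, and the clean device for this is the scaling inequality $g(cx)\le c\,g(x)$ together with $g\ge 1$. The hypotheses $\alpha\ge\beta$ (hence $c\ge 1$) and $(m-1)\beta-p\alpha>0$ are exactly what is needed for the bracketed coefficient $[(m-1)-p\alpha/\beta]$ to have the right sign so that replacing $g(\beta r)$ by its minimum value $1$ is a legitimate lower estimate; without them the argument collapses. Everything else is bookkeeping.
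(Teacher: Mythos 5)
Your proof is correct and takes essentially the same route as the paper, which proves this lemma simply by combining Lemma \ref{1028} with Lemma \ref{1027}; cases (ii) and (iii) are indeed immediate because $rh_1(r)$ and $rh_2(r)$ are already constants. Your extra estimate in case (i) --- using $x\coth x\geq 1$ and the monotonicity of $\coth$ to pass from the $r$-dependent bound $2\{1+(m-1)\beta r\coth(\beta r)-p\alpha r\coth(\alpha r)\}$ to the constant $2(m-p\frac{\alpha}{\beta})$ --- is exactly the detail the paper leaves implicit, and you carry it out correctly.
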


\begin{corollary}\label{1013}
Let $u:(M,g)\rightarrow (R^n,h)$ be a critical point of $p$-Ginzburg-Landau energy functional from a Riemannian manifold with a pole $x_0$ to a standard Euclidean space. Assume that the radial curvature $K_r$ of M satisfies one of the following three conditions:
\begin{enumerate}[(i)]
\item $-\alpha^2\leq K_r\leq-\beta^2$ with $\alpha>0$, $\beta>0$ and $(m-1)\beta-p\alpha>0$;
\item $-\frac{A}{(1+r^2)^{1+\varepsilon}}\leq K_r\leq \frac{B}{(1+r^2)^{1+\varepsilon}}>0$ with $\varepsilon$, $A\geq 0$, $0\leq B <2\varepsilon$ and $1+(m-1)(1-\frac{B}{2\varepsilon})-pe^{\frac{A}{2\varepsilon}}>0$;
\item $-\frac{a^2}{1+r^2}\leq K_r\leq\frac{b^2}{1+r^2}$ with $a\geq0$, $b^2\in[0,\frac{1}{4}]$ and  $2+(m-1)(1+\sqrt{1-4b^2})-p(1+\sqrt{1+4a^2})>0$.
\end{enumerate}
Then
\begin{align}
\dfrac{\int_{B_{\rho_1}(x_0)}\dfrac{|du|^p}{p}+\dfrac{1}{4\epsilon^n}(1-|u|^2)^2 dv_g }{\rho_1^\sigma}\leq
\dfrac{\int_{B_{\rho_2}(x_0)}\dfrac{|du|^p}{p}+\dfrac{1}{4\epsilon^n}(1-|u|^2)^2 dv_g }{\rho_2^\sigma}. \nonumber
\end{align}
for any $0<\rho_1\leq\rho_2$, where
\begin{numcases}{\sigma=}
(m-p\frac{\alpha}{\beta});  &for $K_r$ satisfies (i) \nonumber \\
1+(m-1)(1-\frac{B}{2\varepsilon})-pe^{\frac{A}{2\varepsilon}}; & for $K_r$ satisfies (ii)\nonumber \\
\frac{2-p+(m-1)(1+\sqrt{1-4b^2})-p\sqrt{1+4a^2}}{2}. &for $K_r$ satisfies (iii) \nonumber
\end{numcases}
\end{corollary}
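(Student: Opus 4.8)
The plan is to obtain the Corollary as an immediate consequence of the monotonicity formula of Theorem \ref{1014} combined with the curvature comparison of Lemma \ref{1029}. The only thing that needs checking is that, under each of the three curvature hypotheses, the eigenvalue condition $(P_1)$ of Theorem \ref{1014} holds with the constant $\sigma$ exhibited in the statement, and that this $\sigma$ is strictly positive; once this is done, Theorem \ref{1014} yields the asserted inequality verbatim.

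First I would invoke Lemma \ref{1029}. In case (i) it gives $\sum_{i=1}^m\lambda_i-p\lambda_m\ge 2(m-p\frac{\alpha}{\beta})$, in case (ii) it gives $\sum_{i=1}^m\lambda_i-p\lambda_m\ge 2[1+(m-1)(1-\frac{B}{2\varepsilon})-pe^{\frac{A}{2\varepsilon}}]$, and in case (iii) it gives $\sum_{i=1}^m\lambda_i-p\lambda_m\ge 2[1-\frac p2+(m-1)\frac{1+\sqrt{1-4b^2}}{2}-\frac p2\sqrt{1+4a^2}]$. Dividing by $2$, the right-hand side in each case is exactly the number $\sigma$ listed in the Corollary; for (iii) this is the one-line rearrangement $1-\frac p2+(m-1)\frac{1+\sqrt{1-4b^2}}{2}-\frac p2\sqrt{1+4a^2}=\frac{2-p+(m-1)(1+\sqrt{1-4b^2})-p\sqrt{1+4a^2}}{2}$. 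Hence $\frac12(\sum_{i=1}^m\lambda_i-p\lambda_m)\ge\sigma$, i.e. $(P_1)$ is verified.

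Next I would check $\sigma>0$, which is where the extra scalar inequalities in the hypotheses enter. In (i), $(m-1)\beta-p\alpha>0$ gives $p\frac{\alpha}{\beta}<m-1$, so $\sigma=m-p\frac{\alpha}{\beta}>1>0$. In (ii) and (iii) the stated inequalities $1+(m-1)(1-\frac{B}{2\varepsilon})-pe^{\frac{A}{2\varepsilon}}>0$ and $2+(m-1)(1+\sqrt{1-4b^2})-p(1+\sqrt{1+4a^2})>0$ are, respectively, precisely $\sigma>0$ and $2\sigma>0$. Thus $u$ satisfies the hypotheses of Theorem \ref{1014} with a positive exponent $\sigma$, and applying that theorem completes the proof.

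I do not expect any genuine obstacle: all the analytic content is packaged in Theorem \ref{1014}, and the geometric content — passing from a bound on the radial curvature $K_r$ to Hessian comparison (Lemma \ref{1028}), then to $\sum\lambda_i-p\lambda_m\ge 2\{1+(m-1)rh_1(r)-prh_2(r)\}$ (Lemma \ref{1027}), then to a constant lower bound (Lemma \ref{1029}) — is already in place. The two small points worth keeping an eye on, if one wishes the argument to be self-contained, are: (a) the normalization $rh_2(r)\ge1$ required by Lemma \ref{1027}, which holds because $x\coth x\ge1$ in case (i), $e^{A/2\varepsilon}\ge1$ in case (ii), and $\frac{1+\sqrt{1+4a^2}}{2}\ge1$ in case (iii); and (b) the use, in case (i), of $\alpha\ge\beta$ (forced by $-\alpha^2\le K_r\le-\beta^2$) together with the monotonicity of $\coth$ to replace $\alpha r\coth(\alpha r)$ by $\frac{\alpha}{\beta}\,\beta r\coth(\beta r)$, which is what collapses the $r$-dependent bound of Lemma \ref{1027} to the constant $2(m-p\frac{\alpha}{\beta})$. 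Both are routine, so the Corollary really is a corollary.
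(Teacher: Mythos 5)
Your proposal is correct and follows exactly the route the paper intends: the Corollary is obtained by feeding the constant lower bounds of Lemma \ref{1029} (derived from Lemmas \ref{1028} and \ref{1027}) into condition $(P_1)$ of Theorem \ref{1014}, with the scalar hypotheses in (i)--(iii) guaranteeing $\sigma>0$. Your supplementary checks (the normalization $rh_2(r)\ge 1$ and the $\coth$ monotonicity argument in case (i)) are accurate and fill in details the paper leaves implicit.
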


Corollarty \ref{1013} yields immediately the following vanishing result.
\begin{theorem}
Suppose that $u:(M,g)\rightarrow (R^n,h)$ is a critical point of $p$-Ginzburg-Landau energy funcional. Let $r$ be the distance function relative to $x_0$. If the radial curvature $K_r$ of $M$ satisfies one of the three conditions in Corollary \ref{1013} and
\begin{align}
\int_{B_r(x_0)}\frac{|du|^p}{p}+\frac{1}{4\epsilon^n}(1-|u|^2)^2dv_g=o(r^{\sigma}), \nonumber
\end{align}
where $\sigma$ is given by Corollary \ref{1013}. Then $du=0$, that is, $u$ is constant.
\end{theorem}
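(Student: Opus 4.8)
The plan is to deduce this vanishing statement directly from the monotonicity formula of Corollary \ref{1013}, in exactly the same way the earlier vanishing result was deduced from Theorem \ref{1014}. First I would fix whichever of the curvature hypotheses (i)--(iii) is being assumed and record the corresponding exponent $\sigma$ supplied by Corollary \ref{1013}. The algebraic conditions on $\alpha,\beta$, on $A,B,\varepsilon$, or on $a,b$ imposed there serve precisely to guarantee $\sigma>0$, which is what makes the subsequent scaling argument meaningful; I would make this positivity explicit at the outset.

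Next, for arbitrary $0<\rho<R$, Corollary \ref{1013} gives
\begin{align}
\frac{1}{\rho^{\sigma}}\int_{B_{\rho}(x_0)}\Big(\frac{|du|^p}{p}+\frac{1}{4\epsilon^n}(1-|u|^2)^2\Big)dv_g
\le
\frac{1}{R^{\sigma}}\int_{B_{R}(x_0)}\Big(\frac{|du|^p}{p}+\frac{1}{4\epsilon^n}(1-|u|^2)^2\Big)dv_g. \nonumber
\end{align}
By the growth hypothesis the right-hand side is $o(1)$ as $R\to\infty$, so letting $R\to\infty$ forces the left-hand side to vanish. Since $\rho>0$ is arbitrary and the integrand $\frac{|du|^p}{p}+\frac{1}{4\epsilon^n}(1-|u|^2)^2$ is nonnegative and continuous, it must vanish identically on $B_{\rho}(x_0)$ for every $\rho$, hence on all of $M$; in particular $|du|\equiv 0$, so $du=0$ and $u$ is constant.

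I do not anticipate any genuine obstacle, since the entire content is packaged in Corollary \ref{1013}, whose proof in turn rests on Theorem \ref{1014} together with the comparison Lemmas \ref{1028} and \ref{1027}. The only points deserving a line of care are that each of the three sign conditions in Corollary \ref{1013} does yield $\sigma>0$ (otherwise dividing by $\rho^{\sigma}$ and $R^{\sigma}$ and passing to the limit would be vacuous), and that the pole assumption on $(M,g)$ ensures the balls $B_R(x_0)$ exhaust $M$ as $R\to\infty$, so that the vanishing of the limit genuinely controls the energy over the whole manifold.
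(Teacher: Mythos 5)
Your proposal is correct and follows essentially the same route as the paper: the paper treats this theorem as an immediate consequence of Corollary \ref{1013}, with the limiting argument ($R\to\infty$ under the $o(r^{\sigma})$ hypothesis, then arbitrariness of $\rho$) spelled out verbatim in the proof of the earlier vanishing theorem based on Theorem \ref{1014}. Your added remarks on the positivity of $\sigma$ and on the exhaustion of $M$ by geodesic balls around the pole are sensible points of care but introduce nothing beyond the paper's argument.
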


\begin{definition}
$E_{p}^{GL}(u)$ is said to have slowly divergent energy, if there exists a positive continuous function $\psi(r)$ such that
\begin{align}
\int_{R_1}^{+\infty}\frac{dr}{r\psi(r)}=+\infty  \nonumber
\end{align}
for some $R_1>0$, and
\begin{align}
\mathop{\lim}_{R\rightarrow \infty}\int_{B_R(x_0)}\dfrac{\dfrac{|du|^p}{p}+\dfrac{1}{4\epsilon^n}(1-|u|^2)^2}{\psi(r(x))}dv_g<\infty. \label{1025}
\end{align}
\end{definition}
\begin{theorem}
Let $u:(M,g)\rightarrow (R^n,h)$ be the critical point of $p$-Ginzburg-Landau energy functional. If $r(x)$ satisfies the condition ($P_1$) and $E_{p}^{GL}(u)$ has slowly divergent energy, then $u$ is a constant map and $u(M)\subseteq S^{n-1}$.
\end{theorem}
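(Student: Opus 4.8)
The plan is to rerun the scheme of the proof of Theorem \ref{1014}, but to stop at the differential inequality for the energy rather than integrating it into a power law, and then to play it off against the slowly divergent energy hypothesis. Write $e_{GL}(u)=\frac{|du|^p}{p}+\frac{1}{4\epsilon^n}(1-|u|^2)^2\ge 0$ for the $p$-Ginzburg-Landau energy density and set $I(R)=\int_{B_R(x_0)}e_{GL}(u)\,dv_g$; since $e_{GL}(u)\ge 0$, the function $I$ is nondecreasing and, by the coarea formula, $I'(R)=\int_{\partial B_R(x_0)}e_{GL}(u)\,ds_g$ for a.e.\ $R$. First I would take $X=\frac12\nabla r^2=r\frac{\partial}{\partial r}$ in the integral identity (\ref{1009}). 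The pointwise estimate (\ref{1010}) together with condition $(P_1)$ gives $\frac12\langle S_{u,p}^{GL},L_Xg\rangle\ge\sigma\,e_{GL}(u)$, so the right-hand side of (\ref{1009}) is at least $\sigma I(R)$; on the other hand, exactly as in (\ref{1011}) and using $u^{\ast}h(r\frac{\partial}{\partial r},\frac{\partial}{\partial r})=r\,|du(\frac{\partial}{\partial r})|^2\ge 0$, the left-hand side is at most $R\,I'(R)$. Combining these yields the differential inequality
\[
R\,I'(R)\ \ge\ \sigma\,I(R),\qquad R>0.
\]

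Next I would bring in the slowly divergent energy. Let $\psi$ be the positive continuous function from the definition, and set $J(R)=\int_{B_R(x_0)}\frac{e_{GL}(u)}{\psi(r(x))}\,dv_g$, so that $\int_{R_1}^{\infty}\frac{dr}{r\psi(r)}=+\infty$ and $J_\infty:=\lim_{R\to\infty}J(R)<\infty$. Since $\psi$ is a function of $r$ alone, the coarea formula gives $J'(R)=\frac{I'(R)}{\psi(R)}$, hence, by the differential inequality above, $J'(R)\ge\frac{\sigma\,I(R)}{R\,\psi(R)}$. Now suppose $u$ were not constant, i.e.\ $I\not\equiv 0$; then $I(R_0)>0$ for some $R_0$, which we may take $\ge R_1$, and monotonicity of $I$ gives $I(R)\ge I(R_0)>0$ for all $R\ge R_0$. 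Integrating over $[R_0,\infty)$ then gives
\[
J_\infty-J(R_0)\ \ge\ \sigma\,I(R_0)\int_{R_0}^{\infty}\frac{dr}{r\psi(r)}\ =\ +\infty,
\]
contradicting $J_\infty<\infty$. Therefore $I\equiv 0$, so $e_{GL}(u)\equiv 0$ on $M$; this forces both $du\equiv 0$ and $(1-|u|^2)^2\equiv 0$, i.e.\ $u$ is constant with $|u|\equiv 1$, so $u(M)\subseteq S^{n-1}$.

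The computation is essentially routine once the differential inequality is in hand. The two points that need a little care are: (a) not collapsing the inequality $R\,I'(R)\ge\sigma I(R)$ into the monotonicity formula of Theorem \ref{1014}, since doing so discards exactly the quantitative information on which the slowly divergent hypothesis feeds; and (b) the passage from $J'(R)\ge\frac{\sigma I(R)}{R\psi(R)}$ to a contradiction, which relies on replacing $I(R)$ by the positive constant $I(R_0)$ for $R\ge R_0$ — legitimate precisely because $e_{GL}(u)\ge 0$ makes $I$ nondecreasing. The divergence of $\int_{R_0}^{\infty}\frac{dr}{r\psi(r)}$ when $R_0\ge R_1$ is immediate since $\psi>0$ is continuous on $[R_1,R_0]$. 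Beyond this I do not expect any genuine difficulty.
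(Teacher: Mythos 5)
Your proposal is correct and follows essentially the same route as the paper: derive the differential inequality $R\,I'(R)\ge\sigma I(R)$ from (\ref{1009})--(\ref{1011}) and $(P_1)$, use it to bound the boundary energy from below when the total energy is bounded away from zero, and contradict the finiteness in the slowly divergent energy condition via the coarea formula. The only cosmetic slip is the phrase ``$u$ not constant, i.e.\ $I\not\equiv 0$'' (a constant map with $|u|\neq 1$ also has $I\not\equiv 0$), but this is harmless since your contradiction only uses $I\not\equiv 0$ and the full conclusion is then read off from $I\equiv 0$, exactly as in the paper.
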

\begin{proof}
From Theorem \ref{1014}, we obtain
\begin{align}
&R\frac{d}{dR}\int_{ B_R(x_0)}(\dfrac{|du|^p}{p}+\dfrac{1}{4\epsilon^n}(1-|u|^2)^2)dv_g \nonumber \\
&\geq\int_{ B_R(x_0)}\frac{1}{2}(\sum_{i}^{m}\lambda_i-p\lambda_{max})(\dfrac{|du|^p}{p}+\dfrac{1}{4\epsilon^n}(1-|u|^2)^2)dv_g.\nonumber
\end{align}
If $u$ is not a constant map contained in $S^{n-1}$, there exists constants $R_0>0$ and $C_0>0$ such that
\begin{align}
\int_{ B_R(x_0)}\dfrac{|du|^p}{p}+\dfrac{1}{4\epsilon^n}(1-|u|^2)^2 dv_g\geq C_0 \nonumber
\end{align}
for any $R\geq R_0$. Thus
\begin{align}
\int_{\partial B_R(x_0)}\dfrac{|du|^p}{p}+\dfrac{1}{4\epsilon^n}(1-|u|^2)^2 ds_g\geq \frac{\sigma C_0}{R},\quad \forall R\geq R_0. \nonumber
\end{align}
Since $E_{p}^{GL}(u)$ has slowly divergent energy, then
\begin{align*}
\mathop{\lim}_{R\rightarrow \infty}\int_{B_R(x_0)}\dfrac{\dfrac{|du|^p}{p}+\dfrac{1}{4\epsilon^n}(1-|u|^2)^2}{\psi(r(x))}dv_g
&=\int_0^{\infty}\frac{dR}{\psi(R)}\int_{\partial B_R(x_0)}\dfrac{|du|^p}{p}+\dfrac{1}{4\epsilon^n}(1-|u|^2)^2 ds_g\\
\geq \int_{R_0}^{\infty}\frac{dR}{\psi(R)} &\int_{\partial B_R(x_0)}\dfrac{|du|^p}{p}+\dfrac{1}{4\epsilon^n}(1-|u|^2)^2 ds_g\\
\geq \sigma C_0&\int_{R_0}^{\infty}\frac{dr}{r\psi(r)}=\infty.
\end{align*}
It is in contradiction to (\ref{1025}).
\end{proof}

\section{Liouville theorem under the asymptotic conditions}

\ \ \ \ \ In \cite{Jin}, Jin established several Liouville theorems for harmonic maps between some Riemanian manifold under some asymptotic condition of the maps at infinity. In particular, he proved that for any harmonic map $f:(R^m,g_0)\rightarrow (N^n,h)$ ($m\geq 3$), if $f(x)\rightarrow P_0\in N^n$ as $|x|\rightarrow +\infty$, then $f$ must be a constant map. In this section, using a similar technique or idea,  we can derive a Liouville theorem for the critical points of the $p$-Ginzburg-Landau energy. To generalize this result to our case it is necessary to give more strictly asymptotic condition at infinity. We begin with evaluating the lower bounder of the energy.
\begin{proposition}\label{1024}
Assume that $u:(M^m,g)\rightarrow (R^n,h)$ is a critical point of the $p$-Ginzburg-Landau energy functional from a Riemannian manifold with a pole $x_0$ to a standard Euclidean space. $r(x)$ is the distance function relative to the pole $x_0$. If it satisfies the condition ($P_1$)
and $u(M)$ is not contained in $S^{n-1}$, then
\begin{align}
\int_{B_{R}(x_0)}\frac{|du|^p}{p}+\dfrac{1}{4\epsilon^n}(1-|u|^2)^2 dv_g \geq C(u)R^{\sigma}, \ \ \ \   R\rightarrow\infty \nonumber
\end{align}
where $C(u)$ is a positive constant only depending on $u$.
\end{proposition}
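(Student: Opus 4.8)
The plan is to read the bound off directly from the monotonicity formula of Theorem~\ref{1014}. Throughout, write $\Phi(R):=\int_{B_R(x_0)}\bigl(\tfrac{|du|^p}{p}+\tfrac{1}{4\epsilon^n}(1-|u|^2)^2\bigr)dv_g$ for the $p$-Ginzburg-Landau energy carried by the geodesic ball $B_R(x_0)$. Since the integrand is nonnegative, $\Phi$ is nondecreasing in $R$, and under hypothesis $(P_1)$ Theorem~\ref{1014} states precisely that $R\mapsto \Phi(R)/R^{\sigma}$ is nondecreasing on $(0,\infty)$; recall also that $\sigma>0$.

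First I would produce a fixed radius $\rho_0>0$ at which $\Phi(\rho_0)>0$. To get this I would argue by contradiction: if $\Phi(\rho)=0$ for every $\rho>0$, then, the integrand being nonnegative and $M=\bigcup_{\rho>0}B_\rho(x_0)$ (because $x_0$ is a pole), we would have $\tfrac{|du|^p}{p}+\tfrac{1}{4\epsilon^n}(1-|u|^2)^2\equiv0$ on all of $M$; hence $du\equiv0$, so $u$ is constant, and $(1-|u|^2)^2\equiv0$, so $|u|\equiv1$, i.e.\ $u(M)$ is a single point of $S^{n-1}$. This contradicts the assumption that $u(M)\not\subseteq S^{n-1}$, so the desired $\rho_0$ exists.

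With $\rho_0$ fixed, I would apply the monotonicity inequality of Theorem~\ref{1014} with $\rho_1=\rho_0$ and $\rho_2=R\ge\rho_0$ to obtain
\begin{align}
\int_{B_R(x_0)}\tfrac{|du|^p}{p}+\tfrac{1}{4\epsilon^n}(1-|u|^2)^2\,dv_g=\Phi(R)\ge\frac{\Phi(\rho_0)}{\rho_0^{\sigma}}\,R^{\sigma}=C(u)\,R^{\sigma},\nonumber
\end{align}
where $C(u):=\rho_0^{-\sigma}\Phi(\rho_0)>0$ depends only on $u$ (through the once-and-for-all choice of $\rho_0$ and the value $\Phi(\rho_0)$), and in particular is independent of $R$. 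This is the assertion as $R\to\infty$.

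This statement is little more than a repackaging of the monotonicity formula, so I do not expect a serious obstacle. The one step that genuinely uses the hypothesis $u(M)\not\subseteq S^{n-1}$ — and the only place any care is needed — is the non-vanishing claim $\Phi(\rho_0)>0$; it is also indispensable, since for a constant map with image in $S^{n-1}$ one has $\Phi\equiv0$ and the asserted lower bound would fail.
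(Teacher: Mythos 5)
Your proposal is correct and follows essentially the same route as the paper: both read the lower bound directly off the monotonicity formula of Theorem~\ref{1014}, using the hypothesis $u(M)\not\subseteq S^{n-1}$ only to produce a radius $\rho_0$ with positive energy $\Phi(\rho_0)>0$ and then setting $C(u)=\rho_0^{-\sigma}\Phi(\rho_0)$. Your explicit contradiction argument for the positivity of $\Phi(\rho_0)$ merely spells out a step the paper asserts without detail, so there is no substantive difference.
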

\begin{proof}
Since $u$ satisfies the conditions in Theorem \ref{1014}, we obtain
\begin{align}
\frac{\int_{B_{\rho}(x_0)}\frac{|du|^p}{p}+\dfrac{1}{4\epsilon^n}(1-|u|^2)^2 dv_g }{\rho^\sigma}\leq
\frac{\int_{B_{R}(x_0)}\frac{|du|^p}{p}+\dfrac{1}{4\epsilon^n}(1-|u|^2)^2 dv_g }{R^\sigma} \nonumber
\end{align}
for any $0<\rho<R$. Note that $u(M)$ is not contained in $S^{n-1}$, there exist some $\rho>0$ such that
\begin{align}
\int_{B_{\rho}(x_0)}\frac{|du|^p}{p}+\dfrac{1}{4\epsilon^n}(1-|u|^2)^2 dv_g >0. \nonumber
\end{align}
Denoted by $C(u)=\frac{\int_{B_{\rho}(x_0)}\frac{|du|^p}{p}+\dfrac{1}{4\epsilon^n}(1-|u|^2)^2 dv_g }{\rho^\sigma}$, then
\begin{align}
\int_{B_{R}(x_0)}\frac{|du|^p}{p}+\dfrac{1}{4\epsilon^n}(1-|u|^2)^2 dv_g\geq C(u)R^{\sigma}.\nonumber
\end{align}
\end{proof}

By Corollary \ref{1013}, and using the above proposition , we easily obtain the following corollary.
\begin{corollary}\label{1031}
Let $u:(M,g)\rightarrow (R^n,h)$ be a critical map of $p$-Ginzburg-Landau energy functional from a Riemannian manifold with a pole $x_0$ to a standard Euclidean space. Assume that the radial curvature $K_r$ of M satisfies one of the following three conditions:
\begin{enumerate}[(i)]
\item $-\alpha^2\leq K_r\leq-\beta^2$ with $\alpha>0$, $\beta>0$ and $(m-1)\beta-p\alpha>0$;
\item $-\frac{A}{(1+r^2)^{1+\varepsilon}}\leq K_r\leq \frac{B}{(1+r^2)^{1+\varepsilon}}>0$ with $\varepsilon$, $A\geq 0$, $0\leq B <2\varepsilon$ and $1+(m-1)(1-\frac{B}{2\varepsilon})-pe^{\frac{A}{2\varepsilon}}>0$;
\item $-\frac{a^2}{1+r^2}\leq K_r\leq\frac{b^2}{1+r^2}$ with $a\geq0$, $b^2\in[0,\frac{1}{4}]$ and  $2+(m-1)(1+\sqrt{1-4b^2})-p(1+\sqrt{1+4a^2})>0$.
\end{enumerate}
If $u(M)$ is not contained in $S^{n-1}$, then
\begin{align}
\int_{B_{R}(x_0)}\frac{|du|^p}{p}+\dfrac{1}{4\epsilon^n}(1-|u|^2)^2 dv_g \geq C(u)R^{\sigma}, \ \ \ \  as \ R\rightarrow\infty \nonumber
\end{align}
where $C(u)$ is a positive constant only depending on $u$, and
\begin{numcases}{\sigma=}
(m-p\frac{\alpha}{\beta});  &for $K_r$ satisfies (i) \nonumber \\
1+(m-1)(1-\frac{B}{2\varepsilon})-pe^{\frac{A}{2\varepsilon}}; & for $K_r$ satisfies (ii)\nonumber \\
\frac{2-p+(m-1)(1+\sqrt{1-4b^2})-p\sqrt{1+4a^2}}{2}.  &for $K_r$ satisfies (iii) \nonumber
\end{numcases}
\end{corollary}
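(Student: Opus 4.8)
\noindent The plan is to deduce this corollary from Proposition \ref{1024} by checking, case by case, that each of the three curvature hypotheses forces the domain manifold to satisfy condition $(P_1)$ with precisely the constant $\sigma$ recorded in the statement. Almost all of that verification is already available in Lemma \ref{1029}, so what is left is the clerical matching of constants together with the observation that the side conditions in (i)--(iii) are exactly the positivity conditions $\sigma>0$.

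First I would record that $(P_1)$ asks for $\tfrac12\bigl(\sum_i\lambda_i-p\lambda_m\bigr)\ge\sigma$, whereas Lemma \ref{1029} supplies the pointwise estimate $\sum_i\lambda_i-p\lambda_m\ge 2\sigma$ in each regime: $\sigma=m-p\tfrac{\alpha}{\beta}$ under (i), $\sigma=1+(m-1)(1-\tfrac{B}{2\varepsilon})-pe^{A/2\varepsilon}$ under (ii), and $\sigma=1-\tfrac p2+(m-1)\tfrac{1+\sqrt{1-4b^2}}{2}-\tfrac p2\sqrt{1+4a^2}$ under (iii). A one-line rearrangement shows these coincide with the three entries of the $\sigma$-bracket in the statement; e.g.\ the last one equals $\tfrac{2-p+(m-1)(1+\sqrt{1-4b^2})-p\sqrt{1+4a^2}}{2}$. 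Next I would check positivity: in (i), $(m-1)\beta-p\alpha>0$ gives $p\alpha/\beta<m-1$, hence $\sigma>1>0$; in (ii) the assumed inequality is literally $\sigma>0$; and in (iii) the assumed inequality $2+(m-1)(1+\sqrt{1-4b^2})-p(1+\sqrt{1+4a^2})>0$ expands to $2\sigma>0$. Thus in all three cases $u$ and $r(x)$ meet the hypotheses of Proposition \ref{1024}.

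Finally, with $(P_1)$ in force, $\sigma>0$, and the standing assumption $u(M)\not\subseteq S^{n-1}$, I would simply invoke Proposition \ref{1024}: the monotonicity formula of Theorem \ref{1014} together with the strict positivity of $\int_{B_\rho(x_0)}\tfrac{|du|^p}{p}+\tfrac{1}{4\epsilon^n}(1-|u|^2)^2\,dv_g$ for some small $\rho$ yields
\[
\int_{B_R(x_0)}\frac{|du|^p}{p}+\frac{1}{4\epsilon^n}(1-|u|^2)^2\,dv_g\ \ge\ C(u)\,R^{\sigma},\qquad R\to\infty,
\]
with $C(u)$ the value of the (nondecreasing) normalized energy at radius $\rho$. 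Since the $\sigma$ here is exactly the one matched in the previous step, this is the asserted inequality in each of the three cases.

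I do not anticipate a genuine obstacle: the analytic substance sits entirely in Lemma \ref{1029} and Proposition \ref{1024}, both already proved. The only point that needs more than blind substitution is the implication $(m-1)\beta-p\alpha>0\Rightarrow m-p\tfrac{\alpha}{\beta}>0$ in case (i), and the expansion turning the case-(iii) side condition into $2\sigma>0$; both are short algebra.
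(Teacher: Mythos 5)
Your proposal is correct and follows essentially the same route as the paper: the paper's one-line proof combines Corollary \ref{1013} (which is just Theorem \ref{1014} applied via the eigenvalue bounds of Lemma \ref{1029}) with Proposition \ref{1024}, which is exactly the chain you verify, including the identification of the three values of $\sigma$ and their positivity. The constant-matching and the checks $m-p\frac{\alpha}{\beta}>0$ and $2\sigma>0$ that you spell out are the only content hidden in the paper's ``easily obtain,'' so nothing is missing.
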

\indent Next, we will show that if $u$ is the critical map of the $2$-Ginzburg-Landau energy functional and it is uniformly bounded, the condition ($P_1$) may be replaced by\\
\indent ($\widetilde{P_1}$): The left hand side of the inequality in ($P_1$) is nonnegative on the whole $M$ and there exists an $R_0>0$ such that ($P_1$) holds for $r(x)\geq R_0$.\\
\indent To prove this assertion, we start with the following lemmas.
\begin{lemma}\label{1035}
Suppose that $u:(M^m,g)\rightarrow (R^n,h)$ is a critical map of the 2-Ginzburg-Landau energy functional and $u$ is uniformly bounded. If $u$ is constant in an open set of $M$, then $u$ is constant on $M$.
\end{lemma}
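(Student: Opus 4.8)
The plan is to reduce the statement to the classical unique continuation principle for second order elliptic differential inequalities, using the uniform bound on $u$ to control the nonlinear term.

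First I would write down the Euler--Lagrange equation in the case $p=2$. By Definition, a critical map of the $2$-Ginzburg-Landau functional solves
\[
\Delta u = -\tfrac{1}{\epsilon^n}\bigl(1-|u|^2\bigr)u =: f(u),
\]
where $f\colon\mathbb{R}^n\to\mathbb{R}^n$, $f(y)=-\epsilon^{-n}(1-|y|^2)y$, is a cubic polynomial map; in particular $f$ is smooth and Lipschitz on every bounded subset of $\mathbb{R}^n$. Let $U\subseteq M$ be the nonempty open set on which $u$ is constant, say $u\equiv c$ on $U$. On $U$ all derivatives of $u$ vanish, so $\Delta u\equiv 0$ there, and the equation forces $f(c)=0$, i.e. $(1-|c|^2)c=0$; the precise value of $c$ is irrelevant, only $f(c)=0$ will be used.

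Next set $w:=u-c$, so that $w\equiv 0$ on $U$ and, applying the fundamental theorem of calculus to $t\mapsto f\bigl(c+tw\bigr)$,
\[
\Delta w=\Delta u=f(u)-f(c)=\Bigl(\int_0^1 Df\bigl(c+t\,w\bigr)\,dt\Bigr)w=:B\,w,
\]
where $Df$ is the Jacobian of $f$ and $B$ is a matrix-valued function on $M$. Since $u$ is uniformly bounded, the point $c+t\,w(x)=c+t\bigl(u(x)-c\bigr)$ lies in a fixed compact subset of $\mathbb{R}^n$ for all $x\in M$ and $t\in[0,1]$, hence $L:=\sup_M\|B\|<\infty$, with $L$ depending only on $n$, $\epsilon$ and $\sup_M|u|$. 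Therefore $w$ satisfies the homogeneous second order elliptic differential inequality $|\Delta w|\le L\,|w|$ on $M$ and vanishes on the nonempty open set $U$.

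Finally I would invoke Aronszajn's unique continuation theorem for second order elliptic operators, applied in local coordinates and combined with the connectedness of $M$ (throughout this paper $M$ is a complete manifold with a pole, hence diffeomorphic to $\mathbb{R}^m$ and connected): a solution of an inequality of the form $|\Delta w|\le L\bigl(|w|+|\nabla w|\bigr)$ that vanishes on an open subset of a connected manifold vanishes identically. Hence $w\equiv 0$, that is $u\equiv c$ on $M$, as claimed. I expect the only delicate point to be the bookkeeping that produces the uniform bound $L$ on the zeroth order coefficient $B$: this is exactly where the hypothesis that $u$ be uniformly bounded enters, and without it the reduction to a differential inequality with bounded coefficients — and thus the applicability of the unique continuation theorem — breaks down. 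Everything else is standard.
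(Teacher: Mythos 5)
Your proof is correct and follows essentially the same route as the paper: the paper's own argument simply writes down the Euler--Lagrange equation $\Delta u+\frac{1}{\epsilon^n}(1-|u|^2)u=0$ and cites Aronszajn's unique continuation theorem, using the boundedness of $u$. Your proposal fills in exactly the standard reduction (subtracting the constant, writing $\Delta w=Bw$ with $B$ bounded via the uniform bound on $u$, and propagating the vanishing by connectedness) that the paper leaves implicit.
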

\begin{proof}
From the Euler-Largrange equation, we have
\begin{align}
\Delta u+\frac{1}{\epsilon^n}(1-|u|^2)u=0. \nonumber
\end{align}
Since $u$ is bounded, using unique continuation theorem in \cite{A}, one can deduce that $u$ is constant on $M$.
\end{proof}

\begin{lemma}\label{1018}
Assume that $u$ satisfies ($\widetilde{P}_1$). If $du$ is not identically zero, then $E^{GL}_{2}(u)=+\infty$.
\end{lemma}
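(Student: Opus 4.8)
The plan is to localize the monotonicity computation of Theorem \ref{1014}. Set
\[
e(R):=\int_{B_R(x_0)}\Big(\tfrac{|du|^2}{2}+\tfrac{1}{4\epsilon^n}(1-|u|^2)^2\Big)\,dv_g,
\]
a nondecreasing, locally absolutely continuous function of $R$ with $e'(R)=\int_{\partial B_R(x_0)}\big(\tfrac{|du|^2}{2}+\tfrac{1}{4\epsilon^n}(1-|u|^2)^2\big)\,ds_g$, exactly as in the proof of Theorem \ref{1014}. The first observation is that inequality (\ref{1015}), specialized to $p=2$, does not use the quantitative bound in ($P_1$): its derivation in the proof of Theorem \ref{1014} only invokes the Cauchy--Schwarz estimate and the positivity of the largest eigenvalue $\lambda_m$ of $Hess_g(r^2)$ (indeed $Hess_g(r^2)(\partial_r,\partial_r)=2$). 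Hence on any manifold with a pole we have
\[
R\,e'(R)\ \ge\ \int_{B_R(x_0)}\tfrac12\Big(\textstyle\sum_{i=1}^m\lambda_i-2\lambda_m\Big)\Big(\tfrac{|du|^2}{2}+\tfrac{1}{4\epsilon^n}(1-|u|^2)^2\Big)\,dv_g .
\]

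Next I would feed in ($\widetilde{P}_1$). For $R\ge R_0$ split the right-hand integral over $B_{R_0}(x_0)$ and over $B_R(x_0)\setminus B_{R_0}(x_0)$: on the first region $\tfrac12(\sum_i\lambda_i-2\lambda_m)\ge0$ by the first clause of ($\widetilde{P}_1$), so that contribution is nonnegative; on the second region $\tfrac12(\sum_i\lambda_i-2\lambda_m)\ge\sigma$ by the second clause. This produces the differential inequality
\[
R\,e'(R)\ \ge\ \sigma\big(e(R)-e(R_0)\big),\qquad R\ge R_0 .
\]

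Then I would argue by dichotomy on the behaviour of $e$ beyond $R_0$. If $e(R)=e(R_0)$ for every $R\ge R_0$, then $du\equiv0$ (and $|u|\equiv1$) throughout the nonempty open set $M\setminus\overline{B_{R_0}(x_0)}$; in particular $u$ is constant on an open subset of $M$, so Lemma \ref{1035} (unique continuation, available because $u$ is bounded) forces $u$ to be constant on all of $M$, contradicting $du\not\equiv0$. Otherwise there is $R_1>R_0$ with $c:=e(R_1)-e(R_0)>0$; since $e$ is nondecreasing, $e(R)-e(R_0)\ge c$ for all $R\ge R_1$, whence $e'(R)\ge\sigma c/R$, and integration from $R_1$ gives $e(R)\ge e(R_1)+\sigma c\log(R/R_1)$. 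Letting $R\to\infty$ yields $E^{GL}_2(u)=\lim_{R\to\infty}e(R)=+\infty$, which is the claim.

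The routine part is the localized monotonicity estimate, which is a verbatim rerun of Theorem \ref{1014}'s computation with the bound applied only outside $B_{R_0}$. The one point genuinely requiring care is the degenerate alternative $e\equiv e(R_0)$: there the monotonicity formula is silent, and one really needs the unique continuation property of Lemma \ref{1035} to rule it out. I expect this interplay between the monotonicity inequality and unique continuation to be the crux of the argument.
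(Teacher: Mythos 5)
Your proof is correct and shares the paper's two essential ingredients: the localized form of the monotonicity computation of Theorem \ref{1014} under $(\widetilde{P}_1)$, which both you and the paper turn into $R\,e'(R)\ge\sigma\,(e(R)-e(R_0))$ for $R\ge R_0$ (the paper's displayed inequality following (\ref{1017}) is exactly this), and Lemma \ref{1035} to dispose of the case where the energy density vanishes identically outside $B_{R_0}(x_0)$. The difference is in how the divergence is extracted. The paper argues by contradiction: assuming $E^{GL}_2(u)<+\infty$, the co-area formula yields a sequence $r_i\to+\infty$ along which $r_i\int_{\partial B_{r_i}(x_0)}\bigl(\frac{|du|^2}{2}+\frac{1}{4\epsilon^n}(1-|u|^2)^2\bigr)ds_g\to0$; feeding these radii into the localized inequality forces the tail energy $\int_{M\setminus B_{R_0}(x_0)}\bigl(\frac{|du|^2}{2}+\frac{1}{4\epsilon^n}(1-|u|^2)^2\bigr)dv_g$ to vanish, and Lemma \ref{1035} then contradicts $du\not\equiv0$. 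You instead set up a dichotomy and integrate the differential inequality directly: either the tail energy is identically zero (excluded by Lemma \ref{1035}, exactly as in the paper), or it exceeds some $c>0$ from a radius $R_1$ on, giving $e'(R)\ge\sigma c/R$ and hence at least logarithmic growth of $e(R)$. Your route is slightly more quantitative (integrating the full inequality $R\,e'(R)\ge\sigma(e(R)-e(R_0))$ would even give growth of order $R^{\sigma}$, consistent with the proposition this lemma feeds into), at the price of treating the degenerate alternative separately, whereas the paper's sequence argument is shorter and handles both alternatives at once; both versions implicitly use that $u$ is a bounded critical point of the $2$-Ginzburg-Landau functional (needed for the conservation law behind (\ref{1015}) and for Lemma \ref{1035}), which you correctly flag.
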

\begin{proof}
By co-area formula, we have
\begin{align}
\int_M \frac{|du|^2}{2}+\frac{1}{4\epsilon^n}(1-|u|^2)^2 dv_g
&=\int_0^{+\infty}\dfrac{dr}{r}\cdot r\int_{\partial B_r(x_0)}(\dfrac{|du|^2}{2}+\dfrac{1}{4\epsilon^n}(1-|u|^2)^2)\dfrac{1}{|\nabla r|}ds_g\nonumber\\
&=\int_0^{+\infty}\frac{dr}{r}\cdot r\int_{\partial B_r(x_0)}\dfrac{|du|^2}{2}+\dfrac{1}{4\epsilon^n}(1-|u|^2)^2ds_g. \nonumber
\end{align}
If $E_{2}^{GL}(u)<+\infty$, we can choose a sequence $\{r_i\}$ such that
\begin{align}
\mathop{\lim}_{r_i\rightarrow +\infty} r_i\int_{\partial B_{r_i}(x_0)}\dfrac{|du|^2}{2}+\dfrac{1}{4\epsilon^n}(1-|u|^2)^2ds_g=0. \label{1017}
\end{align}
Since $r(x)$ satisfies ($\widetilde{P_1}$), by (\ref{1015}) we obtain
\begin{align}
r\frac{d}{dr}\int_{ B_r(x_0)\backslash B_{R_0}(x_0)}\dfrac{|du|^2}{2}+\dfrac{1}{4\epsilon^n}(1-|u|^2)^2dv_g\geq
\sigma\int_{ B_r(x_0)\backslash B_{R_0}(x_0) }\dfrac{|du|^2}{2}+\dfrac{1}{4\epsilon^n}(1-|u|^2)^2dv_g, \nonumber
\end{align}
that is
\begin{align}
r\int_{ \partial B_r(x_0)}\frac{|du|^2}{2}+\frac{1}{4\epsilon^n}(1-|u|^2)^2dv_g\geq
\sigma\int_{ B_r(x_0)\backslash B_{R_0}(x_0) }\dfrac{|du|^2}{2}+\dfrac{1}{4\epsilon^n}(1-|u|^2)^2dv_g. \nonumber
\end{align}
Let $r=r_i$ tend to infinity in above inequality, it follows from (\ref{1017}) that
\begin{align}
\int_{M\setminus B_{R_0}(x_0)}\frac{|du|^2}{2}+\frac{1}{4\epsilon^n}(1-|u|^2)^2dv_g=0. \nonumber
\end{align}
It follows from Lemma \ref{1035} that $du=0$ on $M$ which contradicts with the condition. Therefore $E_{2}^{GL}(u)=+\infty$.
\end{proof}

\begin{proposition}
Assume that $u:(M^m,g)\rightarrow (R^n,h)$ is a critical point of the $2$-Ginzburg-Landau energy functional and $u$ is uniformly bounded . If $r(x)$ satisfies $\widetilde{P_1}$
and $u(M)$ is not contained in $S^{n-1}$, then
\begin{align}
\int_{B_{R}(x_0)}\frac{|du|^2}{2}+\dfrac{1}{4\epsilon^n}(1-|u|^2)^2 dv_g \geq C(u)R^{\sigma}, \nonumber
\end{align}
where $R$ is sufficiently large and $C(u)$ is a positive constant only depending on $u$.
\end{proposition}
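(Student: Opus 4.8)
The plan is to run the monotonicity computation of Proposition \ref{1024} on the annulus $B_R(x_0)\setminus B_{R_0}(x_0)$ instead of on the full ball --- exactly as in the proof of Lemma \ref{1018} --- and then to use Lemma \ref{1035} to dispose of the degenerate possibility that all of the energy sits inside $B_{R_0}(x_0)$. Throughout, write
\[
I(R)=\int_{B_R(x_0)}\Big(\frac{|du|^{2}}{2}+\frac{1}{4\epsilon^{n}}(1-|u|^{2})^{2}\Big)\,dv_g ,
\]
which is non-decreasing and $C^{1}$ in $R$, with $I'(R)=\int_{\partial B_R(x_0)}\big(\frac{|du|^{2}}{2}+\frac{1}{4\epsilon^{n}}(1-|u|^{2})^{2}\big)\,ds_g\ge 0$ by the coarea formula, and put $c_0:=I(R_0)\ge 0$.

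First I would invoke inequality (\ref{1015}), which is a consequence of the conservation law alone and so holds with no assumption on the eigenvalues of $Hess_g(r^2)$:
\[
R\,\frac{d}{dR}I(R)\ \ge\ \int_{B_R(x_0)}\frac12\Big(\sum_{i=1}^{m}\lambda_i-2\lambda_m\Big)\Big(\frac{|du|^{2}}{2}+\frac{1}{4\epsilon^{n}}(1-|u|^{2})^{2}\Big)\,dv_g .
\]
Under $(\widetilde{P_1})$ the factor $\frac12(\sum_i\lambda_i-2\lambda_m)$ is nonnegative on all of $M$, so the domain of integration on the right may be shrunk to $B_R(x_0)\setminus B_{R_0}(x_0)$, where it is $\ge\sigma$; this gives, for every $R\ge R_0$,
\[
R\,I'(R)\ \ge\ \sigma\big(I(R)-c_0\big).
\]
With $J(R):=I(R)-c_0$ this reads $RJ'(R)-\sigma J(R)\ge 0$, i.e. $\big(R^{-\sigma}J(R)\big)'\ge 0$, so $R^{-\sigma}J(R)$ is non-decreasing on $[R_0,+\infty)$ (recall $\sigma>0$).

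It remains only to produce one radius $R_1>R_0$ with $J(R_1)>0$, that is, $I(R_1)>I(R_0)$. If no such $R_1$ existed, monotonicity of $I$ would force $I\equiv c_0$ on $[R_0,+\infty)$, so the continuous nonnegative density $\frac{|du|^{2}}{2}+\frac{1}{4\epsilon^{n}}(1-|u|^{2})^{2}$ would vanish identically on the nonempty open set $\{x\in M:r(x)>R_0\}$; in particular $du\equiv 0$ and $|u|\equiv 1$ there. Then $u$ is constant on an open subset of $M$, so by Lemma \ref{1035} --- this is the step that uses $p=2$ and the uniform boundedness of $u$, through the unique continuation theorem of \cite{A} --- $u$ is constant on all of $M$, with value of norm $1$, i.e. $u(M)\subseteq S^{n-1}$, contradicting the hypothesis. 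Hence such an $R_1$ exists, and for every $R\ge R_1$ the monotonicity of $R^{-\sigma}J(R)$ yields $J(R)\ge R_1^{-\sigma}J(R_1)\,R^{\sigma}$, so that
\[
\int_{B_R(x_0)}\frac{|du|^{2}}{2}+\frac{1}{4\epsilon^{n}}(1-|u|^{2})^{2}\,dv_g=I(R)\ge J(R)\ge C(u)\,R^{\sigma},\qquad C(u):=\frac{I(R_1)-I(R_0)}{R_1^{\sigma}}>0 ,
\]
for all sufficiently large $R$, which is the claim.

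I expect the genuinely new point --- and the main obstacle --- to be the exclusion of the alternative $I\equiv c_0$: because $(\widetilde{P_1})$ supplies coercivity only on $\{r\ge R_0\}$, one cannot a priori rule out that the whole energy is concentrated in $B_{R_0}(x_0)$, and eliminating this requires a strong unique continuation property for the Euler--Lagrange equation. Such a property is available only when $p=2$, so that $\Delta u+\frac{1}{\epsilon^{n}}(1-|u|^{2})u=0$ is uniformly elliptic, and when $u$ is bounded, so that its zeroth-order coefficient is bounded; this is precisely why the statement carries these two extra hypotheses. The remaining manipulations are the elementary ODE argument already used for Proposition \ref{1024} and Lemma \ref{1018}.
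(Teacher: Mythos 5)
Your argument is correct, and it reaches the conclusion by a slightly different route than the paper. The paper applies the integral formula (\ref{1006}) on the annulus $B_R(x_0)\setminus B_{R_0}(x_0)$, which introduces the inner boundary flux $H(R_0)=\int_{\partial B_{R_0}(x_0)}S^{GL}_{2,u}(X,\nu)\,ds_g$ of indeterminate sign; it then invokes Lemma \ref{1018} (infinite total energy, itself proved via the unique continuation Lemma \ref{1035}) to guarantee that the shifted quantity $\int_{B_R\setminus B_{R_0}}(\cdot)\,dv_g+\tfrac{2H(R_0)}{\sigma}$ is eventually positive, and only then integrates the differential inequality. You instead keep the inequality (\ref{1015}) on the full ball -- which, as you note, needs no curvature hypothesis since the pointwise estimate behind it only uses $\lambda_m\ge 2>0$ -- and use the nonnegativity part of $(\widetilde{P_1})$ to discard the contribution of $B_{R_0}(x_0)$, so your additive constant is $-I(R_0)$ rather than the flux term; positivity of $J$ at a single radius is then secured directly from Lemma \ref{1035}, bypassing Lemma \ref{1018} altogether. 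This is a clean simplification: you never have to estimate or absorb $H(R_0)$, and you only use the weaker fact that the energy outside $B_{R_0}(x_0)$ is not identically zero, while the paper's route recycles Lemma \ref{1018} (which it needs elsewhere) and obtains $E^{GL}_2(u)=+\infty$ as an intermediate statement. Both proofs use $p=2$ and the uniform boundedness of $u$ in exactly the same place, namely through the unique continuation theorem of \cite{A} via Lemma \ref{1035}, and both yield a constant $C(u)$ of the same nature, so your proposal is a correct and arguably tidier variant of the paper's argument.
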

\begin{proof}
Taking $D=B_R(x_0)\backslash B_{R_0}(x_0)$ and $X=r\frac{\partial}{\partial r}=\frac{1}{2}\nabla r^2$, by (\ref{1006}) we get
\begin{align}
\int_{\partial B_R(x_0)}S_{2,u}^{GL}(X,\nu)ds_g-\int_{\partial B_{ R_0}(x_0)}S_{2,u}^{GL}(X,\nu)ds_g
&=\int_{B_{R}(x_0)\backslash B_{R_0}(x_0)} <S_{2,u}^{GL},\nabla \theta_X>dv_g\nonumber\\
\geq \int_{B_{R}(x_0)\backslash B_{R_0}(x_0)}[\frac{1}{2}(\sum_i^m \lambda_i-2\lambda_{max})]&(\dfrac{|du|^2}{2}+\dfrac{1}{4\epsilon^n}(1-|u|^2)^2)dv_g. \nonumber
\end{align}
Denoting $\int_{\partial B_{R_0}(x_0)}S_{p=2,u}^H(X,\nu)ds_g$ by $H(R_0)$, then (\ref{1011}) and condition ($\widetilde{P}_1$) yield
\begin{align}
R\int_{\partial B_R(x_0)}\frac{|du|^2}{2}+\frac{1}{4\epsilon^n}(1-|u|^2)^2ds_g-2H(R_0)\geq\sigma\int_{B_R(x_0)\backslash B_{R_0}(x_0)}\frac{|du|^2}{2}+\frac{1}{4\epsilon^n}(1-|u|^2)^2dv_g. \nonumber
\end{align}
It also can be written as
\begin{align}
R\frac{d}{dR}\{\int_{B_R(x_0)\backslash B_{R_0}(x_0)}\frac{|du|^2}{2}+\frac{1}{4\epsilon^n}(1-|u|^2)^2dv_g+\frac{2H(R_0)}{\sigma}\}\nonumber \\
\geq\sigma\{\int_{B_R(x_0)\backslash B_{R_0}(x_0)}\frac{|du|^2}{2}+\frac{1}{4\epsilon^n}(1-|u|^2)^2dv_g+\frac{2H(R_0)}{\sigma}\}. \nonumber
\end{align}
From Lemma \ref{1018}, we know that $E_{2}^{GL}(u)=+\infty$. Therefore, when $R$ is sufficiently large, we get
\begin{align}
\int_{B_R(x_0)\backslash B_{R_0}(x_0)}\frac{|du|^2}{2}+\frac{1}{4\epsilon^n}(1-|u|^2)^2dv_g+\frac{2H(R_0)}{\sigma}>0. \nonumber
\end{align}
Then
\begin{align}
\frac{\frac{d}{dR}\{\int_{B_R(x_0)\backslash B_{R_0}(x_0)}\frac{|du|^2}{2}+\frac{1}{4\epsilon^n}(1-|u|^2)^2dv_g+\frac{2H(R_0)}{\sigma}\}}{\int_{B_R(x_0)\backslash B_{R_0}(x_0)}\frac{|du|^2}{2}+\frac{1}{4\epsilon^n}(1-|u|^2)^2dv_g+\frac{2H(R_0)}{\sigma}}\geq \frac{\sigma}{R}.\nonumber
\end{align}
Fixing some $R_0<\overline{R}<R$ and integrating the above formula on [$\overline{R},R$], we get
\begin{align}
\int_{B_R(x_0)}\frac{|du|^2}{2}+\frac{1}{4\epsilon^n}(1-|u|^2)^2dv_g\geq\{H(R_0,\overline{R})-\frac{2H(R_0)}{\sigma R^{\sigma}}\}R^{\sigma},\nonumber
\end{align}
where $H(R_0,\overline{R})=\frac{\int_{B_{\overline{R}}(x_0)\backslash B_{R_0}(x_0)}\frac{|du|^2}{2}+\frac{1}{4\epsilon^n}(1-|u|^2)^2dv_g+\frac{2H(R_0)}{\sigma}}{\overline{R}^{\sigma}}$.
When $R\rightarrow +\infty$, $\frac{2H(R_0)}{\sigma R^{\sigma}}$ can be controlled by $H(R_0,\overline{R})$. Consequently
\begin{align}
\int_{B_R(x_0)}\frac{|du|^2}{2}+\frac{1}{4\epsilon^n}(1-|u|^2)^2dv_g\geq C(u)R^{\sigma},\nonumber
\end{align}
where $C(u)$ is a constant depending on the map $u$.
\end{proof}

Next we will use the assumption for the map at infinity to derivative an upper bound for the growth rate. The condition that we will assume for $u$ is as follow:\\
\indent $(P_2)$ There exists a positive constant $\widetilde{\sigma}$ less than $\sigma$ in $(P_1)$ such that
\begin{align}
\ \ \ \ \ \ \ max_{r(x)=r}h^2(u(x),P_0)\leq r^{\widetilde{\sigma}}\int_r^{+\infty}\frac{ds}{vol(\partial B_s(x_0))}  \ \ \ \ \  \ for\ \ \  r(x)\gg 1. \nonumber
\end{align}
\begin{theorem} \label{1026}
Let $u:(M,g)\rightarrow(R^n,h)$ be a critical point of p-Ginzburg-Landau energy functional. Suppose that $|du|^{p-2}$ is uniformly bounded and $r(x)$ satisfies the condition $(P_1)$. If $u(x)\rightarrow P_0 \in S^{n-1}$ and $u$ satisfies the  condition $(P_2)$, the $u$ must be a constant map.
\end{theorem}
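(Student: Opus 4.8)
The plan is to combine the lower energy growth bound from Proposition~\ref{1024} with an upper energy growth bound extracted from the asymptotic condition $(P_2)$, and then show the two are incompatible unless $u$ is constant. First I would argue by contradiction: suppose $u$ is not constant. Since $u(x)\to P_0\in S^{n-1}$, the potential term $\tfrac{1}{4\epsilon^n}(1-|u|^2)^2\to 0$ at infinity, but more to the point, if $du\not\equiv 0$ then $u(M)$ is certainly not the single point $P_0$; I need to check that under the hypotheses $u(M)$ cannot be contained in $S^{n-1}$ either (otherwise $u$ would map into the sphere and, being asymptotically constant, one still wants to reach a contradiction — but the cleanest route is to note that a non-constant critical map into $S^{n-1}$ with $u\to P_0$ would force, via the monotonicity formula of Theorem~\ref{1014}, unbounded energy, while the sphere constraint makes the potential vanish identically and the argument reduces to the harmonic-map case handled essentially as in Jin). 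Assuming then that $u(M)\not\subseteq S^{n-1}$, Proposition~\ref{1024} gives
\[
\int_{B_R(x_0)}\frac{|du|^p}{p}+\frac{1}{4\epsilon^n}(1-|u|^2)^2\,dv_g\ \geq\ C(u)R^{\sigma}
\]
for all large $R$.

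Next I would derive the competing upper bound. The mechanism, following Jin, is to test the basic integral identity \eqref{1006} with the radial field $X=r\tfrac{\partial}{\partial r}$ but now exploiting that $du(X)=\nabla_{\partial/\partial r}u$ and that the full map $u$, not just its differential, is controlled: write $w=u-P_0$, so $h(u(x),P_0)=|w(x)|$ and $w\to 0$ at infinity with the decay rate in $(P_2)$. One integrates by parts (using the conservation law $\mathrm{div}\,S_{u,p}^{GL}=0$ and the divergence structure of the Euler–Lagrange operator $\mathrm{div}(|du|^{p-2}du)+\tfrac{1}{\epsilon^n}(1-|u|^2)u=0$) over the annulus $B_R(x_0)\setminus B_{R_0}(x_0)$ to express a weighted energy integral in terms of a boundary term on $\partial B_R(x_0)$ that involves $|du|^{p-1}\cdot|w|$; here the uniform bound on $|du|^{p-2}$ is used to keep the gradient power manageable, and a Cauchy–Schwarz (or Hölder) split together with the co-area formula converts the boundary integral into something of the form $\big(\max_{r(x)=R}|w|^2\big)^{1/2}\cdot\big(\text{energy on }\partial B_R\big)^{1/2}\cdot(\mathrm{vol}\,\partial B_R)^{1/2}$. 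Feeding in $(P_2)$, namely $\max_{r(x)=R}|w|^2\le R^{\widetilde\sigma}\int_R^\infty ds/\mathrm{vol}(\partial B_s)$, and running a Gronwall / differential-inequality argument of the type already used in the proof of the preceding Proposition, yields
\[
\int_{B_R(x_0)}\frac{|du|^p}{p}+\frac{1}{4\epsilon^n}(1-|u|^2)^2\,dv_g\ \leq\ C'\,R^{\widetilde\sigma}+(\text{lower order})
\]
for large $R$.

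Finally, since $\widetilde\sigma<\sigma$ by the hypothesis in $(P_2)$, comparing the lower bound $C(u)R^\sigma$ with the upper bound $C'R^{\widetilde\sigma}$ and letting $R\to\infty$ forces $C(u)=0$, i.e.\ $\int_{B_\rho}\tfrac{|du|^p}{p}+\tfrac{1}{4\epsilon^n}(1-|u|^2)^2\,dv_g=0$ for every $\rho$, hence $du\equiv 0$ and $u$ is constant; the limiting value is then $P_0$ by the asymptotic hypothesis.

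I expect the main obstacle to be the derivation of the upper bound: one must choose the test vector field and the integration-by-parts scheme so that the resulting boundary term is genuinely controlled by $\max_{r(x)=R}h^2(u(x),P_0)$ together with a square root of the energy flux through $\partial B_R$, and not by the energy itself (which would make the estimate circular). Making the Cauchy–Schwarz step close requires that the weight generated by the condition $(P_1)$ on $\mathrm{Hess}(r^2)$ has exactly the right homogeneity to absorb the factor $r^{\widetilde\sigma}$, and that the improper integral $\int_r^\infty ds/\mathrm{vol}(\partial B_s)$ converges (which is implicit in $(P_2)$ being meaningful) and is compatible with the volume growth forced by the curvature assumptions. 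A secondary technical point is the subtlety when $|du|^{p-2}$ degenerates for $p>2$ or blows up for $1\le p<2$; the uniform boundedness assumption on $|du|^{p-2}$ is precisely what is invoked to bypass this, and I would make sure every application of it is legitimate on the annuli in question.
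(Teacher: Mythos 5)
Your overall strategy --- the lower growth bound of Proposition \ref{1024} against an upper growth bound extracted from $(P_2)$ via integration by parts, Cauchy--Schwarz and a differential inequality, then a contradiction because $\widetilde{\sigma}<\sigma$ --- is exactly the route the paper (following Jin) takes. But the step you yourself identify as the main obstacle is where your proposal has a genuine gap: the choice of test map. Pairing the Euler--Lagrange operator with $w=u-P_0$ produces in the bulk the potential contribution $-\frac{1}{\epsilon^n}(1-|u|^2)\,u\cdot(u-P_0)$, which has no sign and is not the potential density $(1-|u|^2)^2$; consequently your annulus identity can control at best $\int |du|^p$, whereas the lower bound of Proposition \ref{1024} is for the \emph{full} Ginzburg--Landau energy (the potential part alone could carry the $R^{\sigma}$ growth), so the two bounds never refer to the same quantity and the contradiction does not close. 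The paper's proof avoids this by first replacing $u$ by $Au$ for an orthogonal matrix $A$ (orthogonal maps preserve critical points) so that every coordinate $c_k$ of $P_0$ is nonzero, noting that then $u_k\neq 0$ outside some $B_{R_1}(x_0)$, and testing the first-variation identity \eqref{1019} with $\omega=\phi(r)\widetilde{u}$, $\widetilde{u}_k=(u_k^2-c_k^2)/u_k$. This choice is engineered so that $\sum_k u_k\widetilde{u}_k=|u|^2-1$, which turns the potential pairing into $+\frac{1}{\epsilon^n}(1-|u|^2)^2\geq 0$, while $\partial_j\widetilde{u}_k=(1+c_k^2/u_k^2)\partial_j u_k$ keeps the gradient pairing $\geq |du|^p$; hence the bulk term dominates $G(R)=\int_{B_R(x_0)\setminus B_{R_2}(x_0)}|du|^p+\frac{1}{\epsilon^n}(1-|u|^2)^2\,dv_g+D(R_1)$, whose derivative is exactly the boundary flux appearing after Cauchy--Schwarz, so the ODE argument $G^2\leq G'J$ with $J(R)=\int_{\partial B_R(x_0)}|du|^{p-2}\sum_k\widetilde{u}_k^2\,ds_g\leq \widetilde{C}\,\tau(R)\,vol(\partial B_R(x_0))$ runs, and $|\widetilde{u}|$ decays at the rate $h(u,P_0)$ so $(P_2)$ applies. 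Without this (or an equivalent) choice of test section, your ``boundary term involving $|du|^{p-1}|w|$'' plus Gronwall cannot yield an upper bound for the same quantity that Proposition \ref{1024} bounds from below.

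Two secondary points. First, the upper bound in the paper comes from the weak form of the Euler--Lagrange equation (the variation $u+t\omega$), not from the stress--energy identity \eqref{1006} with $X=r\frac{\partial}{\partial r}$, which is used only for the monotonicity/lower bound; your sketch conflates the two. Second, your treatment of the case $u(M)\subseteq S^{n-1}$ is vague, though the repair is easy and implicit in the paper: if $u(M)\subseteq S^{n-1}$ the potential vanishes identically, the monotonicity of Theorem \ref{1014} still applies, and a non-constant $u$ again forces energy growth at least $C(u)R^{\sigma}$, so the same comparison gives the contradiction.
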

\begin{proof}
Suppose the critical point $u$ is not constant, then by Proposition \ref{1024}, the energy of $u$ must be infinite. That is, $\int_{B_R(x_0)}\frac{|du|^p}{p}+\frac{1}{4\epsilon^n}(1-|u|^2)^2dv_g\rightarrow +\infty$ as $r(x)\rightarrow +\infty$. \\
\indent Since $P_0=(c_1,c_2,\cdots,c_{\alpha},\cdots,c_n)\in S^{n-1}$, then $\sum_{\alpha=1}^n c_{\alpha}^2=1$. it is clear that we can choose a  orthogonal matrix $A$ such that $AP_0=\widetilde{P}_0=(\widetilde{c}_1,\widetilde{c}_2,\cdots,\widetilde{c}_{\alpha},\cdots,\widetilde{c}_n),\widetilde{c}_{\alpha}\neq 0$, for each
$ \alpha=1,2,\cdots n$. Clearly if $u$ is the critical point of $p$-Ginzburg-Landau energy functional, then $Au$ is also the critical point. Hence without loss of generality we may assume that $u(x)\rightarrow p_0\in S^{n-1}$, where $p_0=(c_1,c_2,\cdots,c_{\alpha},\cdots,c_n),c_{\alpha}\neq 0,\alpha=1,2,\cdots n.$\\
\indent Now the assumption that $u(x)\rightarrow P_0$ as $r(x)\rightarrow +\infty$ implies that there exists an $R_1>0$ and  a neighbourhood $U$ of $P_0$ such that for $r(x)> R_1$, $u(x)\in U$ and $u_{\alpha}\neq 0$ for any $\alpha=1,2,\cdots,n$. \\
\indent For $\omega \in C_0^2(M\backslash B_{R_1}(x_0),U)$, we consider the variation $u+t\omega:M\rightarrow R^n$ defined as follows:
\[(u+t\omega)(q)=\begin{cases}
  u(q)        \quad   & q\in B_{R_1}(x_0),\\
  (u+t\omega)(q) &q\in M^m\backslash B_{R_1}(x_0)
\end{cases}\]
for sufficiently small $t$. Since $u$ is the critical point of $p$-Ginzburg-Landau energy functional, we have
\[\frac{d}{dt}|_{t=0} E_{p}^{GL}(u+t\omega)=0\]
that is,
\begin{align}
\int_{M^m\backslash B_{R_1}(x_0)}
|du|^{p-2}\sum_{k=1}^{n}g^{ij}\frac{\partial u_k}{\partial x_i}\frac{\partial \omega_k}{\partial x_j}-\dfrac{1}{\epsilon^n}(1-\sum_{k=1}^{n}u_k^2)\sum_{k=1}^{n}u_k\omega_k dv_g=0. \label{1019}
\end{align}
Choose $\omega(x)=\phi(r(x))\widetilde{u}(x)$ in (\ref{1019}) for $\phi(t)\in C_0^\infty(R_1,\infty),\widetilde{u}_k=\frac{u_k^2-c_k^2}{u_k}$, we obtain
\begin{align}
\int_{M^m \setminus B_{R_1}(x_0)}|du|^{p-2}&\sum_{k=1}^ng^{ij}\frac{\partial u_k}{\partial x_i}\frac{\partial \tilde{u_k}}{\partial x_j}\phi(r(x))-\frac{1}{\epsilon^n}(1-\sum_{k=1}^nu_k^2)\phi(r(x))\sum_{k=1}^nu_k\tilde{u_k}dv_g \nonumber \\
&=\int_{M^m \setminus B_{R_1}(x_0)}|du|^{p-2}\sum_{k=1}^ng^{ij}\frac{\partial u_k}{\partial x_i}\tilde{u_k}\frac{\partial \phi(r(x))}{\partial x_j}dv_g. \label{1020}
\end{align}
By a standard approximation argument, (\ref{1020}) holds for Lipschitz functions $\phi$ with compact support. \\
\indent For $0<\theta\leq 1$, define
\[\varphi_\theta(t)=
\begin{cases}
1 \quad & t\leq 1;\\
1+\frac{1-t}{\theta} &1<t<1+\theta;\\
0 & t\geq 1+\theta.
\end{cases}\]
In (\ref{1020}), choose the Lipschitz function $\phi(r(x))$ to be
\[\phi(r(x))=\varphi_\theta(\frac{r(x)}{R})(1-\varphi_1(\frac{r(x)}{R_1})),\ \ R>2R_1\ \ \  and \ \ \ R_2=2R_1.\]
Then the first term on left hand side of (\ref{1020}) becomes
\begin{align}
&\int_{M^m \setminus B_{R_1}(x_0)}|du|^{p-2}\sum_{k=1}^ng^{ij}\frac{\partial u_k}{\partial x_i}\frac{\partial \tilde{u_k}}{\partial x_j}\phi(r(x))dv_g\nonumber \\
&=\int_{B_{R_2}(x_0) \setminus B_{R_1}(x_0)}|du|^{p-2}\sum_{k=1}^ng^{ij}\frac{\partial u_k}{\partial x_i}\frac{\partial \tilde{u_k}}{\partial x_j}[1-\varphi_1(\frac{r(x)}{R_1})]dv_g\nonumber \\
&+\int_{B_{R}(x_0) \setminus B_{R_2}(x_0)}|du|^{p-2}\sum_{k=1}^ng^{ij}\frac{\partial u_k}{\partial x_i}\frac{\partial \tilde{u_k}}{\partial x_j}dv_g\nonumber \\
&+\int_{B_{R(1+\theta)}(x_0) \setminus B_{R}(x_0)}|du|^{p-2}\sum_{k=1}^ng^{ij}\frac{\partial u_k}{\partial x_i}\frac{\partial \tilde{u_k}}{\partial x_j}\varphi_{\theta}(\frac{r(x)}{R})dv_g. \label{1021}
\end{align}
The second term on left hand side of (\ref{1020}) becomes
\begin{align}
&-\int_{M^m\setminus B_{R_1(x_0)}}\frac{1}{\epsilon^n}(1-\sum_{k=1}^nu_k^2)\phi(r(x))\sum_{k=1}^nu_k\tilde{u_k}dv_g \nonumber \\
&=-\int_{B_{R_2}(x_0)\setminus B_{R_1(x_0)}}\frac{1}{\epsilon^n}(1-\sum_{k=1}^nu_k^2)[1-\varphi_1(\frac{r(x)}{R_1})]\sum_{k=1}^nu_k\tilde{u_k}dv_g \nonumber \\
&-\int_{B_R(x_0) \setminus B_{R_2(x_0)}}\frac{1}{\epsilon^n}(1-\sum_{k=1}^nu_k^2)\sum_{k=1}^nu_k\tilde{u_k}dv_g \nonumber \\
&-\int_{B_{R(1+\theta)}(x_0) \setminus B_{R(x_0)}}\frac{1}{\epsilon^n}(1-\sum_{k=1}^nu_k^2)\varphi_{\theta}(\frac{r(x)}{R})\sum_{k=1}^nu_k\tilde{u_k}dv_g. \label{1022}
\end{align}
We can also compute the right hand side of (\ref{1020}) as follows
\begin{align}
&\int_{M^m \setminus B_{R_1}(x_0)}|du|^{p-2}\sum_{k=1}^ng^{ij}\frac{\partial u_k}{\partial x_i}\tilde{u_k}\frac{\partial \phi(r(x))}{\partial x_j}dv_g\nonumber \\
&=\int_{B_{R_2}(x_0) \setminus B_{R_1}(x_0)}|du|^{p-2}\sum_{k=1}^ng^{ij}\frac{\partial u_k}{\partial x_i}\tilde{u_k}\frac{\partial \varphi_1(\frac{r(x)}{R_1})}{\partial x_j}dv_g\nonumber \\
&+\frac{1}{R\theta}\int_{B_{R(1+\theta)}(x_0) \setminus B_{R}(x_0)}|du|^{p-2}\sum_{k=1}^ng^{ij}\frac{\partial u_k}{\partial x_i}\tilde{u_k}\frac{\partial r(x)}{\partial x_j}dv_g. \label{1023}
\end{align}
Set
\begin{align}
&D(R_1)=\int_{B_{R_2}(x_0) \setminus B_{R_1}(x_0)}|du|^{p-2}\sum_{k=1}^ng^{ij}\frac{\partial u_k}{\partial x_i}\frac{\partial \tilde{u_k}}{\partial x_j}[1-\varphi_1(\frac{r(x)}{R_1})]dv_g \nonumber \\
&\ \ \ \ -\int_{B_{R_2}(x_0)\setminus B_{R_1(x_0)}}\frac{1}{\epsilon^n}(1-\sum_{k=1}^nu_k^2)[1-\varphi_1(\frac{r(x)}{R_1})]\sum_{k=1}^nu_k\tilde{u_k}
dv_g \nonumber \\
&\ \ \ \ -\int_{B_{R_2}(x_0) \setminus B_{R_1}(x_0)}|du|^{p-2}\sum_{k=1}^ng^{ij}\frac{\partial u_k}{\partial x_i}\tilde{u_k}\frac{\partial \varphi_1(\frac{r(x)}{R_1})}{\partial x_j}dv_g. \nonumber
\end{align}
Substitute (\ref{1021}), (\ref{1022}) and (\ref{1023}) into (\ref{1020}), then letting $\theta \rightarrow 0$, we have
\begin{align}
&\int_{B_{R}(x_0) \setminus B_{R_2}(x_0)}|du|^{p-2}\sum_{k=1}^ng^{ij}\frac{\partial u_k}{\partial x_i}\frac{\partial \tilde{u_k}}{\partial x_j}-\frac{1}{\epsilon^n}(1-\sum_{k=1}^nu_k^2)\sum_{k=1}^nu_k\tilde{u_k}dv_g+D(R_1)\nonumber \\
&\ \ \ \ \ \ \ \ =\int_{\partial B_{R}(x_0)}|du|^{p-2}\sum_{k=1}^n\frac{\partial u_k}{\partial x_i}\nu^i\tilde{u_k}ds_g,\label{1000}
\end{align}
where $\nu^i=\frac{\partial r}{\partial x_j}$ and $\nu=\nu^i\frac{\partial}{\partial x_i}$ is the outer normal vector field along $B_{R_0}(x_0)$. \\
\indent Note that $\widetilde{u_k}=\frac{u_k^2-c_k^2}{u_k}$. Thus
$\frac{\partial \widetilde{u_k}}{\partial x_j}=(1+\frac{c_k^2}{u_k^2})\frac{\partial u_k}{\partial x_j}$. Then (\ref{1000}) becomes
\begin{align}
&\int_{B_{R}(x_0) \setminus B_{R_2}(x_0)}|du|^{p-2}\sum_{k=1}^ng^{ij}\frac{\partial u_k}{\partial x_i}
\frac{\partial u_k}{\partial x_j}(1+\frac{c_k^2}{u_k^2})+\frac{1}{\epsilon^n}(1-|u|^2)^2dv_g+D(R_1)\nonumber \\
&\ \ \ \ \ \ \ \ \ =\int_{\partial B_{R}(x_0)}|du|^{p-2}\sum_{k=1}^n\frac{\partial u_k}{\partial x_i}\nu^i\tilde{u_k}ds_g. \label{1001}
\end{align}
Indeed,
\begin{align}
\sum_{k=1}^n\frac{\partial u_k}{\partial x_i}\nu^i\widetilde{u_k}=\sum_{k=1}^n\langle \frac{\partial u_k}{\partial x_i}dx^i\otimes\frac{\partial}{\partial y_k}, \frac{\partial r}{\partial x_j}\widetilde{u_k}dx^i\otimes\frac{\partial}{\partial y_k}\rangle. \nonumber
\end{align}
Therefore
\begin{align}
&\int_{\partial B_{R}(x_0)}|du|^{p-2}\sum_{k=1}^n\frac{\partial u_k}{\partial x_i}\nu^i\tilde{u_k}ds_g \nonumber \\
&=\int_{\partial B_{R}(x_0)}|du|^{p-2}\sum_{k=1}^n\langle \frac{\partial u_k}{\partial x_i}dx^i\otimes\frac{\partial}{\partial y_k}, \frac{\partial r}{\partial x_j}\widetilde{u_k}dx^i\otimes\frac{\partial}{\partial y_k}\rangle ds_g \nonumber \\
&\leq \int_{\partial B_R(x_0)}|du|^{p-2}|\frac{\partial u_k}{\partial x_i}dx^i\otimes \frac{\partial}{\partial y_k}||\frac{\partial r}{\partial x_j}\widetilde{u_k}dx^i\otimes \frac{\partial}{\partial y_k}|ds_g \nonumber  \\
&\leq \sqrt{\int_{\partial B_R(x_0)}|du|^{p-2}|\frac{\partial u_k}{\partial x_i}dx_i\otimes \frac{\partial}{\partial y_k}|^2ds_g}\sqrt{\int_{\partial B_R(x_0)}|du|^{p-2}|\frac{\partial r}{\partial x_j}\widetilde{u_k}dx^i\otimes \frac{\partial}{\partial y_k}|^2ds_g} \nonumber \\
&=\sqrt{\int_{\partial B_{R}(x_0)}|du|^pds_g}\sqrt{\int_{\partial B_R(x_0)}|du|^{p-2}(\sum_{k=1}^n\widetilde{u_k}^2)ds_g} \nonumber \\
&\leq\sqrt{\int_{\partial B_{R}(x_0)}|du|^p+\frac{1}{\epsilon^n}(1-|u|^2)^2ds_g}\sqrt{\int_{\partial B_R(x_0)}|du|^{p-2}(\sum_{k=1}^n\widetilde{u_k}^2)ds_g}. \label{1002}
\end{align}
Next, for any $R\geq R_2$ we let
\begin{align}
G(R)=\int_{B_R(x_0)\setminus B_{R_2}(x_0)}|du|^p+\frac{1}{\epsilon^n}(1-|u|^2)^2dv_g+D(R_1). \nonumber
\end{align}
Then
\begin{align}
G^{\prime}(R)=\int_{\partial B_{R}(x_0)}|du|^p+\frac{1}{\epsilon^n}(1-|u|^2)^2ds_g. \nonumber
\end{align}
Hence from (\ref{1001}), (\ref{1002}) and the fact that $1+\frac{c_{\alpha}^2}{u_{\alpha}^2}\geq 1$ for any $\alpha=1,2,\cdots,n$.
\begin{align}
G^2(R)\leq G^{\prime}(R)\int_{\partial B_R(x_0)}|du|^{p-2}\sum_{k=1}^n\widetilde{u_k}^2 ds_g. \nonumber
\end{align}
\indent On the other hand, we have the following estimate.
\begin{align}
G(R)-D(R_1)&=\int_{B_R(x_0)\setminus B_{R_2}(x_0)} |du|^p+\frac{1}{\epsilon^n}(1-|u|^2)^2dv_g\nonumber \\
&=p\int_{B_R(x_0)\setminus B_{R_2}(x_0)}\frac{|du|^p}{p}dv_g+4\int_{B_R(x_0)\setminus B_{R_2}(x_0)}\frac{1}{4\epsilon^n}(1-|u|^2)^2dv_g \nonumber \\
&\geq min\{p,4\}\int_{B_R(x_0)\setminus B_{R_2}(x_0)} [\frac{|du|^p}{p}+\frac{1}{4\epsilon^n}(1-|u|^2)^2]dv_g  \nonumber \\
&=C_{p,4}\int_{B_R(x_0)\setminus B_{R_2}(x_0)} [\frac{|du|^p}{p}+\frac{1}{4\epsilon^n}(1-|u|^2)^2]dv_g. \nonumber
\end{align}
\indent Since $E_{p}^{GL}(u)$ is infinity, there exists $\widetilde{R}\geq R_2$, $G(R)>0$ for any $R>\widetilde{R}$. \\
\indent Set $J(R)=\int_{\partial B_R(x_0)}|du|^{p-2}\sum_{k=1}^n\widetilde{u_k}^2 ds_g$. Then
\begin{align}
G^2(R)\leq G^{\prime}(R)J(R). \nonumber
\end{align}
For any $\widetilde{R}\leq R< R_4$, we have
\begin{align}
\int_R^{R_4}\frac{G^{\prime}(r)}{G^2(r)}dr\geq \int_R^{R_4}\frac{dr}{J(r)}, \nonumber \\
\frac{1}{G(R)}-\frac{1}{G(R_4)}\geq \int_R^{R_4}\frac{dr}{J(r)}. \nonumber
\end{align}
When $R_4\rightarrow +\infty$, we get $G(R)\leq \frac{1}{\int_R^{\infty}\frac{dr}{J(r)}}$. \\
\indent Note the fact that $|du|^{p-2}$ is uniformly bounded. Using the condition ($P_2$) and $u(x)\rightarrow P_0$ as $r(x)\rightarrow +\infty$, we get
\begin{align}
J(r)&=\int_{\partial B_r(x_0)}|du|^{p-2}\sum_{k=1}^n\widetilde{u_k}^2 ds_g \nonumber \\
    &\leq\int_{\partial B_r(x_0)}|du|^{p-2}\tau(r)ds_g \nonumber \\
    &\leq \widetilde{C}\tau(r)\cdot vol(\partial B_r(x_0)), \nonumber
\end{align}
where $\widetilde{C}$ is a constant only depending on $u$ and $\tau(r)$ is chosen in such a way that
\begin{enumerate}
  \item $\tau(r)$ is nonincreasing on $(\widetilde{R}, +\infty)$ and $\tau(r)\rightarrow 0$ as $r\rightarrow +\infty$;
  \item $\tau(r)\geq \max_{r(x)=r}\{\sum_{k=1}^n\widetilde{u_k}^2\}$;
  \item $\tau(r)\leq C_{P_0} r^{\widetilde{\sigma}} \cdot \int_r^{+\infty}\frac{ds}{vol(\partial B_{s}(x_0))}$,
\end{enumerate}
where $C_{P_0}$ is a constant only depending on $P_0$.
Then we can derive
\begin{align}
\int_R^{+\infty}\frac{dr}{J(r)}\geq \int_R^{+\infty}\frac{dr}{\widetilde{C}\tau(r)vol(\partial B_r(x_0))}\geq \frac{1}{\widetilde{C} \tau(R)}\int_R^{+\infty}\frac{dr}{vol(\partial B_r(x_0))}\geq\frac{1}{C_1 R^{\widetilde{\sigma}}}, \nonumber
\end{align}
where $C_1=\widetilde{C}\cdot C_{P_0}$. Hence $G(R)\leq C_1 R^{\widetilde{\sigma}}$ for any $R\geq \widetilde{R}$. By the definition of $G(R)$, we have
\begin{align}
&\int_{B_R(x_0)} \frac{|du|^p}{p}+\frac{1}{4\epsilon^n}(1-|u|^2)^2dv_g \nonumber \\
&\leq \frac{C_1}{C_{p,4}}R^{\widetilde{\sigma}}-\frac{D(R_1)}{C_{p,4}}+\int_{B_{R_2}(x_0)} \frac{|du|^p}{p}+\frac{1}{4\epsilon^n}(1-|u|^2)^2dv_g \nonumber \\
&=\{CR^{\widetilde{\sigma}-\sigma}+\frac{C(u)}{R^{\sigma}}\}R^{\sigma}, \nonumber
\end{align}
where $C(u)$ is constant only depending on $u$. Since $\widetilde{\sigma}<\sigma$, it contradicts with Proposition \ref{1024}.
\end{proof}

In \cite{prs}, the authors give the volume growth estimates under Ricci curvature conditions. Hence, applying the results to the following cases, the right side of the inequality in condition ($P_2$) can be expressed as a polynomial.
\begin{corollary} \label{1030}
Let $u:(M,g)\rightarrow (R^n,h)$ be a critical point of $p$-Ginzburg-Landau energy functional. Suppose that $|du|^{p-2}$ is uniformly bounded. Assume that the radial curvature $K_r$ of M satisfies the following condition
\begin{align}
-\frac{A}{(1+r^2)^{1+\varepsilon}}\leq K_r\leq \frac{B}{(1+r^2)^{1+\varepsilon}}\ \ \ \ \  with \ \ \varepsilon>0, \nonumber
\end{align}
where $A\geq 0$, $0\leq B <2\varepsilon$ and $1+(m-1)(1-\frac{B}{2\varepsilon})-pe^{\frac{A}{2\varepsilon}}>0$. If $u(x)\rightarrow P_0\in S^{n-1}$ as $r(x)\rightarrow +\infty$, and
\begin{align}
max_{r(x)=r}h^2(u(x),P_0)\leq\frac{r^{\widetilde{\sigma}-(m-2)}}{(m-2)\omega_me^{\frac{(m-1)A}{2\varepsilon}}},   \nonumber
\end{align}
then $u$ must be a constant map. Here $\omega_m$ is the $(m-1)$-volume of the unit sphere in $R^m$ and $\widetilde{\sigma}$ is any positive constant such that $\widetilde{\sigma}<[1+(m-1)(1-\frac{B}{2\varepsilon})-e^{\frac{A}{2\varepsilon}}p]$.
\end{corollary}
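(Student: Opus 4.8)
The plan is to reduce the statement to Theorem \ref{1026} with $\sigma=1+(m-1)(1-\tfrac{B}{2\varepsilon})-pe^{A/2\varepsilon}$, so that everything comes down to verifying the hypotheses of that theorem. By assumption $|du|^{p-2}$ is uniformly bounded and $u(x)\to P_0\in S^{n-1}$ as $r(x)\to+\infty$. The radial curvature bound together with $1+(m-1)(1-\tfrac{B}{2\varepsilon})-pe^{A/2\varepsilon}>0$ is precisely case (ii) of Lemma \ref{1029}, which gives $\sum_{i=1}^m\lambda_i-p\lambda_m\geq 2\sigma$, i.e. $\tfrac12\big(\sum_{i=1}^m\lambda_i-p\lambda_m\big)\geq\sigma>0$; thus condition $(P_1)$ holds with this $\sigma$. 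It remains only to check condition $(P_2)$, which amounts to converting the volume growth estimate of \cite{prs} into an explicit polynomial lower bound for $\int_r^{+\infty}ds/vol(\partial B_s(x_0))$.

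Next I would record the volume bound. From the Hessian comparison behind Lemma \ref{1028}(ii), the distance function obeys $\Delta r\leq(m-1)\varphi'(r)/\varphi(r)$, where $\varphi$ is the model solution of $\varphi''=\frac{A}{(1+r^2)^{1+\varepsilon}}\varphi$ with $\varphi(0)=0$, $\varphi'(0)=1$. Since $\varphi$ is convex with $\varphi(0)=0$ we have $\varphi(r)\leq r\varphi'(r)$, hence $(\log\varphi')'\leq\frac{Ar}{(1+r^2)^{1+\varepsilon}}$, and integrating from $0$ gives $\log\varphi'(r)\leq\frac{A}{2\varepsilon}\big(1-(1+r^2)^{-\varepsilon}\big)<\frac{A}{2\varepsilon}$, so $\varphi'(r)<e^{A/2\varepsilon}$ and $\varphi(r)<re^{A/2\varepsilon}$. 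Working in geodesic polar coordinates, $\frac{d}{dr}vol(\partial B_r(x_0))=\int_{\partial B_r(x_0)}\Delta r\,ds_g\leq(m-1)\tfrac{\varphi'(r)}{\varphi(r)}vol(\partial B_r(x_0))$, so $vol(\partial B_r(x_0))/\varphi(r)^{m-1}$ is nonincreasing; letting $r\to0$ and using $vol(\partial B_r(x_0))\sim\omega_m r^{m-1}$ and $\varphi(r)\sim r$, we obtain $vol(\partial B_r(x_0))\leq\omega_m\varphi(r)^{m-1}\leq\omega_m e^{(m-1)A/2\varepsilon}r^{m-1}$.

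For $m\geq3$ (implicit in the statement through the factor $m-2$), this yields
\[
\int_r^{+\infty}\frac{ds}{vol(\partial B_s(x_0))}\;\geq\;\frac{1}{\omega_m e^{(m-1)A/2\varepsilon}}\int_r^{+\infty}\frac{ds}{s^{m-1}}\;=\;\frac{r^{-(m-2)}}{(m-2)\,\omega_m e^{(m-1)A/2\varepsilon}}.
\]
Combining this with the hypothesis $\max_{r(x)=r}h^2(u(x),P_0)\leq\dfrac{r^{\widetilde{\sigma}-(m-2)}}{(m-2)\omega_m e^{(m-1)A/2\varepsilon}}$ gives
\[
\max_{r(x)=r}h^2(u(x),P_0)\;\leq\;r^{\widetilde{\sigma}}\int_r^{+\infty}\frac{ds}{vol(\partial B_s(x_0))}\qquad\text{for }r\gg1,
\]
which is exactly condition $(P_2)$, while $\widetilde{\sigma}<\sigma$ holds by the standing assumption $\widetilde{\sigma}<1+(m-1)(1-\tfrac{B}{2\varepsilon})-e^{A/2\varepsilon}p$. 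With $(P_1)$, $(P_2)$, the bound on $|du|^{p-2}$ and $u(x)\to P_0\in S^{n-1}$ all in force, Theorem \ref{1026} applies and $u$ must be constant.

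The one place where something beyond bookkeeping is needed is the volume comparison in the second paragraph: one cannot simply insert the crude bound $\Delta r\leq(m-1)e^{A/2\varepsilon}/r$ into $\tfrac{d}{dr}\log vol(\partial B_r(x_0))\leq\Delta r$ and integrate from the pole, since for $A>0$ that produces a non-integrable $1/r$ singularity at $r=0$; one must instead compare against the genuine model function $\varphi$, which behaves like $r$ near $0$, and use $\varphi'\leq e^{A/2\varepsilon}$. This refinement is exactly what underlies Lemma \ref{1028}(ii) and the volume estimates of \cite{prs}, so in a complete write-up it may simply be quoted; the remainder of the argument is a routine verification of the hypotheses of Theorem \ref{1026}.
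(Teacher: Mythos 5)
Your proposal is correct and follows essentially the same route as the paper: verify $(P_1)$ (via the curvature comparison of Lemma \ref{1029}(ii), equivalently through Corollary \ref{1031}), obtain the surface-volume bound $vol(\partial B_r(x_0))\leq \omega_m e^{(m-1)A/2\varepsilon}r^{m-1}$ so that the hypothesis on $h^2(u(x),P_0)$ implies $(P_2)$, and conclude by Theorem \ref{1026}. The only difference is that you re-derive the volume comparison from the model ODE, whereas the paper simply deduces the Ricci lower bound and quotes Corollary 2.17 of \cite{prs}.
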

\begin{proof}
For the condition on the radial curvature $K_r$, it follows that
\begin{align}
Ric(x)\geq -\frac{(m-1)A}{(1+r^2(x))^{1+\varepsilon}}, \nonumber
\end{align}
for any $x\in M$. Then a direct calculation yields
\begin{align}
\int_0^{+\infty}\frac{Ar}{(1+r^2)^{1+\varepsilon}}dr=\frac{A}{2\varepsilon}. \nonumber
\end{align}
By the volume comparison theorem (cf. Corollary $2.17$ in [PRS]), we obtain
\begin{align}
vol(\partial B_{r}(x_0))\leq \omega_me^{\frac{(m-1)A}{2\varepsilon}}r^{m-1}, \nonumber
\end{align}
where $\omega_m$ is the $(m-1)$-volume of the unit sphere in $R^m$, and thus
\begin{align}
(\int_R^{+\infty}\frac{dr}{vol(\partial B_r(x_0))})^{-1}\leq (m-2)\omega_me^{\frac{(m-1)A}{2\varepsilon}}R^{m-2} \nonumber
\end{align}
for $R\gg1$. Using Corollary \ref{1031} and Theorem \ref{1026}, we can get the result.
\end{proof}
\begin{corollary}
Let $u:(M,g)\rightarrow (R^n,h)$ be a critical point of $p$-Ginzburg-Landau energy functional. Suppose that $|du|^{p-2}$ is uniformly bounded. Assume that the radial curvature $K_r$ of M satisfies the following condition
\begin{align}
-\frac{a^2}{1+r^2}\leq K_r\leq\frac{b^2}{1+r^2}\nonumber
\end{align}
with $a\geq0$, $b^2\in[0,\frac{1}{4}]$ and $2+(m-1)(1+\sqrt{1-4b^2})-p(1+\sqrt{1+4a^2})>0$. If $u(x)\rightarrow P_0\in S^{n-1}$ as $r(x)\rightarrow +\infty$ and
\begin{align}
max_{r(x)=r}h^2(u(x),P_0)\leq Cr^{\widetilde{\sigma}-(m-1)A^{\prime}+1},    \nonumber
\end{align}
then $u$ must be a constant map. Here $A^{\prime}=\frac{1+\sqrt{1+4a^2}}{2}$ and $\widetilde{\sigma}$ is any positive constant such that $\widetilde{\sigma}<[1-\frac{p}{2}+(m-1)\frac{1+\sqrt{1-4b^2}}{2}-\frac{p}{2}\sqrt{1+4a^2}]$.
\end{corollary}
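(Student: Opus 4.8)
The plan is to verify the hypotheses of Theorem \ref{1026} and then quote it, exactly as in the proof of Corollary \ref{1030}. The only genuinely new point is the volume growth of the geodesic spheres of $M$: under the pinching (iii) the Ricci lower bound $Ric(x)\ge-\frac{(m-1)a^2}{1+r^2(x)}$ satisfies $\int_0^{+\infty}\frac{a^2s}{1+s^2}\,ds=+\infty$, so the Euclidean-type volume comparison exploited in Corollary \ref{1030} is unavailable and one must argue directly from the Hessian comparison in Lemma \ref{1028}(iii).

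First I would check condition $(P_1)$. By Lemma \ref{1028}(iii), $Hess_g(r)$ is pinched between $\frac{1+\sqrt{1-4b^2}}{2r}[g-dr\otimes dr]$ and $\frac{1+\sqrt{1+4a^2}}{2r}[g-dr\otimes dr]$; since $r\cdot\frac{1+\sqrt{1+4a^2}}{2r}=A'\ge 1$, Lemma \ref{1027} applies and gives, as recorded in Lemma \ref{1029}(iii), $\frac{1}{2}\big(\sum_i\lambda_i-p\lambda_m\big)\ge\sigma$ with $\sigma=1-\frac{p}{2}+(m-1)\frac{1+\sqrt{1-4b^2}}{2}-\frac{p}{2}\sqrt{1+4a^2}$, which is positive by the assumption $2+(m-1)(1+\sqrt{1-4b^2})-p(1+\sqrt{1+4a^2})>0$; this is exactly the exponent in Corollary \ref{1031}(iii). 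Next I would produce the volume bound: tracing the upper Hessian estimate gives $\Delta r\le\frac{(m-1)A'}{r}$, so if $\mathcal{A}(r,\theta)$ denotes the area density of $\partial B_r(x_0)$ in geodesic polar coordinates, then $\partial_r\log\mathcal{A}(r,\theta)=\Delta r\le\frac{(m-1)A'}{r}$; integrating from a fixed $r_0$ yields $\mathcal{A}(r,\theta)\le C_{r_0}\,r^{(m-1)A'}$, hence $vol(\partial B_r(x_0))\le C\,r^{(m-1)A'}$ for $r\ge r_0$. Since $\sqrt{1+4a^2}\ge 1$ we have $(m-1)A'\ge m-1\ge 1$; assuming this inequality is strict (otherwise $\int_r^{+\infty}ds/vol(\partial B_s(x_0))$ diverges and $(P_2)$ holds trivially), it follows that $\big(\int_r^{+\infty}ds/vol(\partial B_s(x_0))\big)^{-1}\le C'\,r^{(m-1)A'-1}$ for $r\gg 1$.

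Combining these, the hypothesis $\max_{r(x)=r}h^2(u(x),P_0)\le C r^{\widetilde{\sigma}-(m-1)A'+1}=C r^{\widetilde{\sigma}}\cdot r^{1-(m-1)A'}$ together with the volume estimate yields $\max_{r(x)=r}h^2(u(x),P_0)\le \widetilde{C}\,r^{\widetilde{\sigma}}\int_r^{+\infty}\frac{ds}{vol(\partial B_s(x_0))}$. Since near $P_0$, where each $u_k\to c_k\ne 0$, the quantity $\sum_k\widetilde{u_k}^2=\sum_k\frac{(u_k^2-c_k^2)^2}{u_k^2}$ is controlled by $h^2(u,P_0)$ up to a constant depending only on $P_0$ and the chosen neighbourhood of $P_0$, this delivers condition $(P_2)$ with $\widetilde{\sigma}<\sigma$. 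With $|du|^{p-2}$ uniformly bounded, $u(x)\to P_0\in S^{n-1}$, and $(P_1)$, $(P_2)$ now in hand, Theorem \ref{1026} applies and forces $u$ to be a constant map.

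The step I expect to be the main obstacle is the volume estimate: extracting the sharp growth $vol(\partial B_r(x_0))\lesssim r^{(m-1)A'}$ from the Hessian comparison alone, rather than from an integrated-Ricci comparison as in Corollary \ref{1030}, and matching the resulting exponent $(m-1)A'-1$ to the power appearing in the hypothesis; the remaining bookkeeping, which bounds $\sum_k\widetilde{u_k}^2$ by $h^2(u,P_0)$ in verifying $(P_2)$, is routine.
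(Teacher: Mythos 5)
Your proposal is correct and follows the same overall skeleton as the paper's proof: verify $(P_1)$ with $\sigma=1-\frac{p}{2}+(m-1)\frac{1+\sqrt{1-4b^2}}{2}-\frac{p}{2}\sqrt{1+4a^2}$ via Lemma \ref{1028}(iii), Lemma \ref{1027} and Corollary \ref{1031}, establish the surface volume growth $vol(\partial B_r(x_0))\leq Cr^{(m-1)A'}$, deduce $(P_2)$ from the hypothesis on $h^2(u(x),P_0)$, and invoke Theorem \ref{1026}. The one genuinely different step is the volume estimate: the paper passes to the Ricci lower bound $Ric\geq-\frac{(m-1)a^2}{1+r^2}$ and cites the volume comparison theorem (Corollary 2.17 in [PRS]) to obtain $vol(\partial B_R(x_0))\leq CR^{(m-1)A'}$ directly, whereas you trace the Hessian upper bound of Lemma \ref{1028}(iii) to $\Delta r\leq\frac{(m-1)A'}{r}$ and integrate the area density in geodesic polar coordinates (legitimate here because $x_0$ is a pole, so there is no cut locus). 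Your motivation for this detour is slightly off: the paper is not reusing the Euclidean-type bound of Corollary \ref{1030} (whose constant indeed required $\int_0^\infty\frac{Ar}{(1+r^2)^{1+\varepsilon}}dr<\infty$), but a different instance of the PRS comparison, which handles the quadratically decaying bound and yields exactly the polynomial growth $r^{(m-1)A'}$; so the cited route is available after all. Still, your Hessian-based derivation buys self-containedness (only lemmas already in the paper plus a standard polar-coordinate identity), an explicit treatment of the degenerate case $(m-1)A'=1$ where $\int_r^{+\infty}ds/vol(\partial B_s(x_0))$ diverges and $(P_2)$ holds trivially, and an explicit justification that $\sum_k\widetilde{u_k}^2$ is controlled by $h^2(u,P_0)$ near $P_0$ (since all $c_k\neq 0$ after the orthogonal rotation), points the paper leaves implicit in its two-line proof.
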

\begin{proof}
\indent For the condition on the radial curvature $K_r$, it follows that
\begin{align}
Ric(x)\geq -\frac{(m-1)a^2}{1+r^2(x)} \nonumber
\end{align}
for any $x\in M$. We can also use the volume comparison theorem (cf. Corollary $2.17$ in [PRS]), then
\begin{align}
vol(\partial B_R(x_0))\leq CR^{(m-1)A^{\prime}}, \nonumber
\end{align}
where $C$ is suitable constant and $A^{\prime}=\frac{1+\sqrt{1+4a^2}}{2}$. Thus
\begin{align}
(\int_R^{+\infty}\frac{dr}{vol(\partial B_r(x_0))})^{-1}\leq CR^{(m-1)A^{\prime}-1}\nonumber
\end{align}
for $R\gg 1$. Using Corollary \ref{1031} and Theorem \ref{1026}, we can get the result.
\end{proof}

If the Riemannian manifold $(M,g)$ is the standard Euclidean space $(R^m,h)$, the eigenvalues of $Hessg(r^2)$ are all $2$. When $p=2$, $\frac{1}{2}(\sum_{i=1}^m\lambda_i-2\lambda_m)=m-2$. Thus we have the following result.
\begin{corollary}
Let $u:(R^m,h)\rightarrow(R^n,h)$ be a critical point of 2-Ginzburg-Landau energy functional. If $u(x)\rightarrow P_0 \in S^{n-1}$ as $r(x)\rightarrow +\infty$, $u$ must be a constant map contained in $S^{n-1}$.
\end{corollary}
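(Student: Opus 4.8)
We specialize the machinery of Sections~2--3 and of Theorem~\ref{1026} to $M=\mathbb{R}^m$ with $p=2$, assuming $m\ge 3$ (the eigenvalue computation below produces the exponent $m-2$, which must be positive). Take the pole $x_0=0$; then $r(x)=|x|$ and $\mathrm{Hess}_g(r^2)=2g$, so every eigenvalue equals $\lambda_i(x)=2$ and $\tfrac12\big(\sum_{i=1}^m\lambda_i-p\lambda_m\big)=m-2=:\sigma>0$, i.e.\ condition $(P_1)$ holds with this $\sigma$. Moreover $|du|^{p-2}=|du|^{0}\equiv1$ is uniformly bounded, so the standing hypotheses of Theorem~\ref{1026} are satisfied. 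If $u(\mathbb{R}^m)\subseteq S^{n-1}$, then $|u|^2\equiv1$, the Euler--Lagrange equation collapses to $\Delta u=0$, hence $0=\tfrac12\Delta|u|^2=|\nabla u|^2$ and $u$ is constant; so it suffices to rule out the case $u(\mathbb{R}^m)\not\subseteq S^{n-1}$.

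Assume then $u(\mathbb{R}^m)\not\subseteq S^{n-1}$. By Proposition~\ref{1024} (whose hypotheses $(P_1)$ and $u(M)\not\subseteq S^{n-1}$ both hold),
$$\int_{B_R(0)}\Big(\tfrac{|du|^2}{2}+\tfrac{1}{4\epsilon^n}(1-|u|^2)^2\Big)\,dv_g\ \ge\ C(u)\,R^{\,m-2}\qquad (R\to\infty),\quad C(u)>0.$$
For a competing upper bound I would re-run the estimates in the proof of Theorem~\ref{1026}. The decisive simplification on $\mathbb{R}^m$ is that $\mathrm{vol}(\partial B_s(0))=\omega_m s^{m-1}$, so $r^{\,m-2}\int_r^{+\infty}\frac{ds}{\mathrm{vol}(\partial B_s(0))}=\frac{1}{(m-2)\omega_m}$ is constant; this lets one take the auxiliary exponent equal to the borderline value $\widetilde\sigma=m-2=\sigma$ rather than a strictly smaller one. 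Concretely, after replacing $u$ by $Au$ for a suitable orthogonal $A$ (so that $P_0$ has all coordinates $c_k\ne0$, exactly as in that proof), the hypothesis $u(x)\to P_0\in S^{n-1}$ forces $\max_{r(x)=r}\sum_k\widetilde{u}_k^{\,2}\to0$; hence $\tau(r):=\sup_{s\ge r}\max_{r(x)=s}\sum_k\widetilde{u}_k^{\,2}$ is nonincreasing, tends to $0$, and meets all three requirements imposed on $\tau$ there (the third becoming $\tau(r)\le\mathrm{const}$, valid for $r\gg1$). Feeding this $\tau$ through the chain $G^2(R)\le G'(R)J(R)$ with $J(r)\le\tau(r)\,\mathrm{vol}(\partial B_r(0))$, together with the elementary comparison between $G(R)$ and the $2$-Ginzburg--Landau energy density, yields $G(R)\le(m-2)\omega_m\,\tau(R)\,R^{m-2}$, whence
$$\int_{B_R(0)}\Big(\tfrac{|du|^2}{2}+\tfrac{1}{4\epsilon^n}(1-|u|^2)^2\Big)\,dv_g\ =\ o(R^{\,m-2})\qquad (R\to\infty).$$
This contradicts the lower bound, so $u(\mathbb{R}^m)\subseteq S^{n-1}$; by the first paragraph $u$ is constant, and since $u(x)\to P_0$ we conclude $u\equiv P_0\in S^{n-1}$.

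The step I expect to be the real obstacle is precisely this borderline-exponent issue: Theorem~\ref{1026} as stated requires a strict gap $\widetilde\sigma<\sigma$ in $(P_2)$, whereas the bare hypothesis $u(x)\to P_0$ yields no decay rate for $h^2(u(x),P_0)$, so $(P_2)$ cannot simply be quoted. The resolution exploits the special Euclidean geometry --- the sphere volumes are exactly polynomial, so $r^{\,m-2}\int_r^{\infty}ds/\mathrm{vol}(\partial B_s(0))$ is constant --- which is what permits $\widetilde\sigma=\sigma$ while keeping the vanishing factor $\tau(R)$ visible against the $R^{m-2}$ lower bound of Proposition~\ref{1024}. (An alternative, at the cost of elliptic-regularity work not contained in the excerpt, is to linearize $\Delta u+\tfrac1{\epsilon^n}(1-|u|^2)u=0$ about $P_0$ to obtain a polynomial rate $h^2(u(x),P_0)=O(r^{-\eta})$ with $0<\eta<m-2$, after which $(P_2)$ holds for some $\widetilde\sigma\in(0,\sigma)$ and Theorem~\ref{1026} applies verbatim.)
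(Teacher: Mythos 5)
Your proposal is correct and follows essentially the same route as the paper: the paper's own proof likewise re-runs the argument of Theorem \ref{1026} on $\mathbb{R}^m$ with $p=2$, using the exact Euclidean volume $\mathrm{vol}(\partial B_r)=\omega_m r^{m-1}$ so that the third (polynomial) requirement on $\tau$ drops out and one gets $\int_{B_R}\frac{|du|^2}{2}+\frac{1}{4\epsilon^n}(1-|u|^2)^2\,dv_g\le C\bigl(\tau(R)+C(u)R^{-(m-2)}\bigr)R^{m-2}=o(R^{m-2})$, contradicting the lower bound of Proposition \ref{1024} since $\tau(R)\to 0$. Your handling of the borderline exponent $\widetilde{\sigma}=\sigma=m-2$, of the case $u(\mathbb{R}^m)\subseteq S^{n-1}$, and of the implicit requirement $m\ge 3$ merely makes explicit details the paper leaves tacit.
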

\begin{proof}
Since $(M,g)$ is the standard Euclidean space, from the proofs in Theorem \ref{1026}, we obtain
\begin{align}
\int_R^{+\infty}\frac{dr}{J(r)}\geq \frac{C_m}{\tau(R)}\frac{1}{R^{m-2}}, \ \ \ \ \ for \ \ any \ \ R\geq \widetilde{R} \nonumber
\end{align}
where $C_m$ is a positive constant only depending on $m$ and $\tau(r)$ satisfies the following conditions
\begin{enumerate}
  \item $\tau(r)$ is nonincreasing on $(\widetilde{R}, +\infty)$ and $\tau(r)\rightarrow 0$ as $r\rightarrow +\infty$;
  \item $\tau(r)\geq \max_{r(x)=r}\{\sum_{k=1}^n\widetilde{u_k}^2\}$.
\end{enumerate}
Then
\begin{align}
\int_{B_R(x_0)} \frac{|du|^2}{2}+\frac{1}{4\epsilon^n}(1-|u|^2)^2dv_g\leq C(\tau(R)+\frac{C(u)}{R^{m-2}})R^{m-2}. \nonumber
\end{align}
\end{proof}

\section{Constant Dirichlet Boundary-value Problems}
\begin{definition}
A bounded domain $D\subseteq M$ with $C^1$ boundary $\partial D$ is called starlike if there exists an interior point $x_0\in D$ such that
\begin{align}
\langle \frac{\partial}{\partial r_{x_0}}, \nu \rangle|_{\partial D}\geq 0 \nonumber
\end{align}
where $\nu$ is the unit outer normal to $\partial D$, and the vector field $\frac{\partial}{\partial r_{x_0}}$ is the unit vector field such that for any $x\in D\setminus \{x_0\}\cup \partial D$, $\frac{\partial}{\partial r_{x_0}}$ is the unit vector tangent to the unique geodesic joining $x_0$ to $x$ and pointing away from $x_0$.
\end{definition}
\begin{theorem}
Suppose $M$ satisfies the same condition of Theorem \ref{1014} and $D\subseteq M$ is a bounded starlike domain with $C^1$ boundary. If $u:(M,g)\rightarrow (R^n,h)$ is a critical point of The $p$-Ginzburg-Landau energy functional and $u|_{\partial D}\subseteq S^{n-1}$ is constant, then $u|_D$ is constant.
\end{theorem}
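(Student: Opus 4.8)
The plan is to run the stress--energy computation that underlies Theorem~\ref{1014}, but on the fixed starlike domain $D$ in place of a geodesic ball, and then to use the two boundary hypotheses on $\partial D$ to show that the boundary term has a favourable sign. Throughout I write $e_p(u):=\frac{|du|^p}{p}+\frac{1}{4\epsilon^n}(1-|u|^2)^2$ for the $p$-Ginzburg--Landau integrand, let $x_0$ be the pole of $M$ (which I also take to be the centre with respect to which $D$ is starlike, so that $r$ and $Hess_g(r^2)$ are smooth on $\overline D$), let $r=r(x)$ be the distance from $x_0$, and set $X=\frac{1}{2}\nabla r^2=r\frac{\partial}{\partial r}$.

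First I would invoke Corollary~\ref{1034}: since $u$ is a critical point, $divS_{u,p}^{GL}=0$, so the integral formula (\ref{1006}) applied on $D$ with this $X$ gives
\[
\int_{\partial D}S_{u,p}^{GL}(X,\nu)\,ds_g=\int_D\frac{1}{2}\langle S_{u,p}^{GL},L_Xg\rangle\,dv_g ,
\]
where $\nu$ is the unit outward normal along $\partial D$. For the right-hand side I would simply reuse the pointwise estimate (\ref{1010}) verbatim (it only uses $L_Xg=Hess_g(r^2)$ and diagonalisation of the Hessian): combined with condition $(P_1)$ it gives $\frac{1}{2}\langle S_{u,p}^{GL},L_Xg\rangle\geq\sigma\,e_p(u)\geq0$, and hence the right-hand side is $\geq\sigma\int_D e_p(u)\,dv_g\geq0$.

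The heart of the argument is the boundary term. Because $u|_{\partial D}$ is constant, $du$ annihilates $T\partial D$; decomposing $X=X^{\top}+g(X,\nu)\nu$ with $X^{\top}$ tangent to $\partial D$, this gives $du(X)=g(X,\nu)\,du(\nu)$ along $\partial D$, and choosing a frame $e_1,\dots,e_{m-1}\in T\partial D$, $e_m=\nu$ shows $|du|^2=|du(\nu)|^2$ there, so $(u^*h)(X,\nu)=\langle du(X),du(\nu)\rangle=g(X,\nu)|du|^2$. Since also $|u|\equiv1$ on $\partial D$, the potential part of $S_{u,p}^{GL}$ drops out and one is left with
\[
S_{u,p}^{GL}(X,\nu)=\Big(\frac{1}{p}-1\Big)|du|^p\,g(X,\nu)=-\,\frac{p-1}{p}\,|du|^p\,g(X,\nu)\qquad\text{on }\partial D .
\]
The starlike hypothesis gives $g(X,\nu)=r\langle\partial/\partial r,\nu\rangle\geq0$ on $\partial D$, while $\frac{p-1}{p}\geq0$ since $p\geq1$, so $\int_{\partial D}S_{u,p}^{GL}(X,\nu)\,ds_g\leq0$. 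Combining this with the previous paragraph,
\[
0\ \leq\ \sigma\int_D e_p(u)\,dv_g\ \leq\ \int_D\frac{1}{2}\langle S_{u,p}^{GL},L_Xg\rangle\,dv_g\ =\ \int_{\partial D}S_{u,p}^{GL}(X,\nu)\,ds_g\ \leq\ 0 ,
\]
so $\int_D e_p(u)\,dv_g=0$; as $e_p(u)\geq0$ this forces $du\equiv0$ on $D$, i.e. $u|_D$ is constant.

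The step I expect to require the most care is the boundary computation: one must argue cleanly that the tangential component of $X$ contributes nothing to either $du(X)$ or $(u^*h)(X,\nu)$ (immediate from the constancy of $u|_{\partial D}$ once a frame adapted to $\partial D$ is chosen) and, in the borderline case $p=1$, interpret the factor $|du|^{p-2}$ on the zero set of $du$ in the same way as elsewhere in the paper. Everything else is a direct transcription of the monotonicity argument already carried out for Theorem~\ref{1014}.
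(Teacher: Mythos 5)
Your proposal is correct and follows essentially the same route as the paper: apply the integral identity (\ref{1006}) on $D$ with $X=r\frac{\partial}{\partial r}$, reuse the pointwise estimate from the proof of Theorem \ref{1014} together with $(P_1)$ for the interior term, and use $|u|\equiv 1$ and $du|_{T\partial D}=0$ plus starlikeness to get $S_{u,p}^{GL}(X,\nu)=\frac{1-p}{p}\,r|du|^p\langle \frac{\partial}{\partial r},\nu\rangle\leq 0$ on $\partial D$, forcing $\int_D e_p(u)\,dv_g=0$. This matches the paper's proof step for step, including the identification of the pole with the starlike centre.
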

\begin{proof}
Set $X=r\frac{\partial}{\partial r}$, where $r=r_{x_0}$. From the proof of Theorem \ref{1014}, we have
\begin{align}
\int_D\langle S_{u,p}^{GL},\frac{1}{2}L_Xg\rangle dv_g \geq \sigma\int_D\frac{|du|^p}{p}+\frac{1}{4\epsilon^n}(1-|u|^2)^2dv_g.\label{1005}
\end{align}
Since $u|_{\partial D}\subseteq S^{n-1}$ is constant, then $|u|^2=1$ and for any $\eta \in T(\partial D)$, $du(\eta)=0$. Thus
\begin{align}
\int_{\partial D}S_{u,p}^{GL}(r\frac{\partial}{\partial r},\nu)ds_g&=\int_{\partial D}r\{\frac{|du|^p}{p}\langle \frac{\partial}{\partial r}, \nu \rangle-|du|^{p-2}\langle du(\frac{\partial}{\partial r}),du(\nu) \rangle\}ds_g. \nonumber  \\
&=\int_{\partial D}r\{\frac{|du|^p}{p}\langle \frac{\partial}{\partial r}, \nu \rangle-|du|^{p-2}\langle \frac{\partial}{\partial r},\nu \rangle |du|^2\}ds_g \nonumber \\
&=\int_{\partial D}r|du|^p\langle \frac{\partial}{\partial r},\nu\rangle\frac{1-p}{p}ds_g. \label{1007}
\end{align}
Note that $D$ is starlike, by (\ref{1006}) and (\ref{1007})
\begin{align}
\int_D\langle S^p_{GL},\frac{1}{2}L_Xg \rangle dv_g \leq 0.\label{1008}
\end{align}
From (\ref{1005}) and (\ref{1008}), we have
\begin{align}
\int_D\frac{|du|^p}{p}+\frac{1}{4\epsilon^n}(1-|u|^2)^2dv_g=0. \nonumber
\end{align}
Therefore $u$ is constant.
\end{proof}

\vspace{0.5cm}

\noindent Tian Chong\\
Department of Mathematics, Shanghai Second Polytechnic University\\
Shanghai 201209, China \\
E-mail: valery4619@sina.com \\

\noindent Bofeng Cheng\\
School of Mathematical Science, Fudan University,
Shanghai 200433, China.\\
E-mail: 13641817752@163.com\\

\noindent Yuxin Dong\\
 School of Mathematical Science, Fudan University, Shanghai, 200433,  China.\\
E-mail: yxdong@fudan.edu.cn\\

\noindent Wei Zhang\\
 School of Mathematics, South China University of Technology, Guangzhou, 510641, China.\\
E-mail: sczhangw@scut.edu.cn\\

\end{document}